\newtheorem{teo}{Theorem}
\newtheorem{lem}{Lemma}
 \title{{\bf  Natural  methods of   unsupervised  topological alignment  }}
\author{Mikhail S. Arbatskii, Maksim \,V.~Kukushkin,  Dmitriy E. Balandin,\\ Alexey V. Churov   \\ \\
  % Note on the connection between the Schatten indexes of the operator and its real component  Institutes
  \small  \textit{Russian Clinical Research Center of Gerontology,}\\
    \small  \textit{Pirogov Russian National Research Medical University,}\\
    \small  \textit{ Ministry of Healfcare of the Russian Federation,  129226, Moscow, Russia}\\
  \small\textit{Russian Academy of Sciences,  Kabardino-Balkarian Scientific Center,}\\
 \small\textit{Institute of Applied Mathematics and Automation, 360000,  Nalchik, Russia}\\}
\date{}
\begin{document}
\maketitle

\begin{abstract} In the  paper,    we represent a comparison analysis  of the  methods of the topological alignment and extract the main mathematical principles forming the base of the concept. The main narrative is devoted to the so-called  coupled methods dealing with  the data sets of  various nature. As a main theoretical result, we obtain  harmonious generalizations of the graph Laplacian  and kernel based methods with the central idea to find a natural structure coupling  data sets of various nature. Finally,  we discuss prospective applications and  consider far reaching generalizations  related to the hypercomplex numbers and Clifford algebras.

\end{abstract}
\begin{small}\textbf{Keywords:}
  Manifold alignment; Unsupervised topological alignment
of single-cell multi-omics integration; Graph Laplacian;     Single cell RNA-seq; Single cell epigenomics.   \\\\
{\textbf{MSC}  47B15; 47A10; 47S10; 47B12; 47A75;  47B32}
\end{small}

\section{Introduction}

Manifold alignment is a class of algorithms that create a mapping of heterogeneous data sets into a common, lower-dimensional latent space. The central point is to find a mapping that reveals the entire structure of the initial manifold.  It is clear that a metric reflecting the distance plays a great role in the  issue  as well as a mathematical nature of the mapping, however the latter should  be harmoniously connected with applications.

Diagonal integration represents a joint analysis of multi-omics biological data - genomic, transcriptomic, proteomic, and metabolomic. A critical view of existing methods and tools for diagonal integration must be supported by experience in analyzing individual data types.
The team's works have analyzed genomic data to identify damaging mutations \cite{Mironenko2022}, scRNA-seq transcriptomic data of cell lines \cite{Arbatskiy2024}, proteomic data \cite{Kulebyakina2024}, small non-coding RNA data \cite{Basalova2020}, as well as metabolomic profiles. The accumulated experience of working with various types of omics data creates a foundation for developing methods for their integration.

Experimental methods on individual cells reveal transcriptomic and epigenetic heterogeneity between cells, but the question of their relationship remains open.     Observe that various types of topological alignment include the principle concept of the relationship between images or sometimes between preimages. The relationship is  an artificial structure generally given by an operator with the idea to spread a common structure on  the data sets of the different nature. The literary  survey represented below produce   arguments to justify the concept.   In the paper \cite{Joshua2017},  an approach for integrating several types of measurements of observed data on a single cell is represented. In the paper \cite{Jie Liu 2019}, the authors represent  the  manifold alignment method that is used to integrate several types of measurements of observed data performed on various aliquots of a given cell population. In the paper \cite{KaiCao2022}, the authors represent   a modification of the Gromov-Wasserstein distance-based manifold  alignment method \cite{Chapel2020}, which combines heterogeneous multiomic data sets of a single cell in order to describe and represent common and data set-specific cellular structures in different modalities. In the paper \cite{Duren2018}, the authors combine the two clustering processes so that when clustering cells in the scRNA-seq sample the  information from the scATAC-seq sample could also be used and vice versa. The authors  formulate this
problem of coupled clustering as an optimization problem and present
a method for solving it called by  coupled non-negative
matrix factorization. In the paper  \cite{KaiCao2020},   a new algorithm for unsupervised topological alignment
of multiomic integration for a single cell is represented. The method does not require any information on the correspondence between cells or between measurements. At the first stage, the method embeds the internal low-dimensional structure of each data set of a single cell into a matrix, the elements of which reflect the distances corresponding to cells within the same data set, and then aligns the cell images, in data sets with multiple cells, by
comparison  of distance matrices using the matrix optimization method. Finally, the method projects separate  non-comparable measurements
in single cell data sets into a latent  embedding space to provide comparability of measurements in aligned cell images. In the paper \cite{Dou2020}, the topological alinement method is formulated for two modalities, in this case, a modality is understood as a set of features for a sample of cells,
the information on  the features is reflected in a matrix, the columns of which represent
cells, the rows represent various features. Since the objects in the two datasets   are  different, one additional feature is required for the method the so-called transition matrix connecting the integration of the data sets.     The transition  matrix   is understood as a projection of one data set  onto the feature space corresponding to another data set.   The   transition matrix is obtained from the profiles of scATAC-seq by summing the information given in the bodies of genes
 \cite{Wang 2020,Stuart2019,Korsunsky2019,Rosenberg2018}.\\
Resuming the  descriptions given above, we come to the conclusion that in each case there is a  technically involved   unnatural  structure providing  a relationship between the images of the datasets. Generally, a rather abstract term  "relationship" means the property of preservation of the proportion between the distances in the initial and latent spaces. However, from the mathematical point of view, the constructed mapping should inherent the abstract properties of the involved structure. Thus,  classical mathematical structures such as the structures  defined on the Hilbert space or generated by the properties of well-known operators acting in the Hilbert space (here we use  a notion including  Euclidian spaces) generates mappings with the classical properties. The corresponding methods of the topological alignment are called by natural. From the other hand  the practical relevance appear just due to some technicalities applied in some complicated order that may not be harmonious from the fundamental theory point of view. The corresponding methods of the topological alignment are called by unnatural. In this paper, we study  natural methods such as Laplacian graph and kernel based methods of the topological alignment. Generally, we consider a natural algebraic  structure such as the finite-dimensional unital algebra of the hypercomplex numbers and produce the concrete reasonings corresponding to the field of the complex numbers showing the relevance of the approach. The real and imaginary parts of the complex number correspond to the coupled datasets respectively, thus the construction  of a mapping defined on the complex Hilbert space includes a naturally coupled structure.   The main achievement  is an attempt to construct an abstract qualitative theory creating an opportunity to consider a number of heterogenous data sets endowed with the coupled natural structure,  let alone far reaching modifications and generalizations.

\section{Preliminaries}

Throughout the paper, we consider the ring of finite dimensional matrices  over the field of the complex numbers $\mathbb{C}^{m\times n},$  i.e. of the dimension $(m\times n),\;m,n\in \mathbb{N},$  and use the following notation $
A =\{a_{sj}\}\in \mathbb{C}^{m\times n}, \,s=1,2,...,m,\;j=1,2,...,n
$  for a matrix. We use the following standard notation for the   transpose operation
$
A^{T}=\{a_{js}\}.
$
 Consider a matrix
$
A= \{a_{sj}\}\in \mathbb{C} ^{m\times n},\;a_{sj}\in \mathbb{C},
$
denote
$$
\mathbf{a}_{s }:=
 ( a_{s1}, a_{s2},...,a_{sm})^{T},\; \mathbf{a}^{\cdot}_{ j } = (a_{1j}, a_{2j},...,a_{nj})^{ T }.
$$
We consider a complex linear vector space  $\mathbb{C}^{n}$ consists of the set of column matrices  whose elements are complex numbers $\mathbf{x}=(x_{1},x_{2},...,x_{n})^{T},\; x_{j} \in \mathbb{C},\,j=1,2...,n.$
Analogously, we define the real  linear vector space  $\mathbb{R}^{n}.$
A complex linear vector space supplied with the given bellow structure of the   inner (scalar ) product operation is called by the complex Euclidean space
$$
(\mathbf{x},\mathbf{y})_{\mathbb{C}^{n}}:=\sum\limits_{j=1}^{n}x_{j}\bar{y}_{j},\;\mathbf{x},\mathbf{y}\in \mathbb{C}^{ n}.
$$
  Note that the complex Euclidian space represents a particular case of the unitary space, i.e.  a linear space over the field of the complex numbers.    Define the norm in the general sense  in a space with the inner  product as follows
$
\|\mathbf{x}\|=\sqrt{(\mathbf{x},\mathbf{x})}.
$
Consider a   matrix $A =\{a_{sj}\}\in \mathbb{C}^{m\times n},$ denote $\bar{A} =\{\bar{a}_{sj}\}.$
In accordance with the theorem on  the orthogonal decomposition theorem, we have
$$
\mathbb{C}^{n}=\mathbb{C}^{m} \stackrel{\cdot}{+}\mathbb{C}^{n-m},\,n>m,
$$
i.e. for an arbitrary element     $\mathbf{x}\in \mathbb{C}^{n}$ there exists a unique pair  $\mathbf{x}_{1} \in \mathbb{C}^{m},\,\mathbf{x}_{2}\in \mathbb{C}^{n-m}$ of elements so that the following decomposition holds $
 \mathbf{x} =\mathbf{x}_{1} +\mathbf{x}_{2}.
$
In this way we can put the element  $\mathbf{x}_{1}$ into correspondence with the element   $\mathbf{x},$ what is called the orthogonal projection  of the  space     $\mathbb{C}^{n}$ onto the subspace  $\mathbb{C}^{m}.$
 Given an arbitrary set
$X,$  a totally ordered set
$Y,$  and a function $f:X\rightarrow Y,$  the
$\mathrm{argmin}$
  over some subset
$\Omega \subseteq  X$  is defined by the following expression

$$
  \underset{  x \in \Omega    }{ \mathrm{argmin}} f(x):=\{y\in \Omega:\,f(y)\leq f(x),\,x\in \Omega\}.
 $$
Consider a set of the elements
 $
 \{\mathbf{x}_{1},\mathbf{x}_{2},...,\mathbf{x}_{n} \}
 $
 belonging to  a normed space.
Let us construct a weighted graph $G$ having  $n$ vertexes, where one vertex corresponds to one element, and a set of edges connecting the adjacent vertices of the graph.
We numerate the vertexes according to the given order, i.e. $j$ vertex corresponds to $\mathbf{x}_{j}.$  We suppose that the vertexes    $s$ and $j$ are adjacent to each other if the elements   $\mathbf{x}_{s}$ and  $\mathbf{x}_{j}$ are related   for instance close in some sense. The matter is how to define the rather vague notion of  closeness in a more concrete way \cite{Belkin 2004}.

Here, we consider two variants, the first one is the so called $\varepsilon$ - neighborhood method which postulates the following sense of closeness.  Suppose that the vertexes    $s$ and $j$ are adjacent to each other  if
 $$
 \|\mathbf{x}_{s}-\mathbf{x}_{j} \|   <\varepsilon,
 $$
where the norm is understood in the abstract sense. However, the Euclidean norm  mostly used  in applications.
Note that this type of closeness is geometrically motivated,
the relationship is naturally symmetric. However, it often
leads to a graph  with several connected components and it is  difficult
to choose the convenient value of $\varepsilon$ to avoid the unconnected graph.

The second variant   is the so called $n$ - nearest neighbors method which postulates the following sense of closeness.  Suppose that the vertexes    $s$ and $j$ are adjacent to each other  if  $\mathbf{x}_{s}$ belongs to the set of  $n$ nearest,  in the sense of the norm neighbors,  of the element   $\mathbf{x}_{j}$ or  $\mathbf{x}_{j}$ belongs to the set of   $n$ nearest  neighbors of the element
 $\mathbf{x}_{s}.$

  Having constructed the graph in accordance with the one the above methods, we can choose weights to form the weight matrix. The first one relates to the heat kernel, we assume that if the vertexes $s$ and  $j$ are connected, in symbol $s\sim j,$ then the wight matrix is formed from the elements
$$
W_{sj}=e^{- t^{-1}\|\mathbf{x}_{s}-\mathbf{x}_{j}\|^{2} },\,t\in \mathbb{R} \setminus \{0\},
$$
in the contrary case, we assume that  $W_{sj}  = 0.$ Thus, we have
$$
W_{sj}=\left\{ \begin{aligned}
  e^{- t^{-1}\|\mathbf{x}_{s}-\mathbf{x}_{j}\|^{2}},\;s\sim j \\
 0,\;s\nsim j   \\
\end{aligned}
 \right.  .
$$
In accordance with the made definition, we obtain the adjacency matrix of the weighted graph
$
W= \{W_{sj}\}\in \mathbb{C}^{n\times n}.
$
The substantiation on choosing this weight is represented in the papers \cite{Ham2005}, \cite{Ek2009}. The second one  is the so called simple weight, we assume that $W_{sj}  = 1$ if the vertexes $s$ and  $j$ are connected and $W_{sj}  = 0$ if   $s$ and
$j$ are disconnected, i.e.
$$
W_{sj}=\left\{ \begin{aligned}
  1,\;s\sim j \\
 0,\;s\nsim j   \\
\end{aligned}
 \right.  .
$$
This kind of simplicity gives the opportunity to avoid choosing the value of the parameter    $t.$  Note that a matrix $A\in \mathbb{C}^{m\times n}$ generates the finite-dimensional operator
$
A:  \mathbb{C}^{n}\rightarrow \mathbb{C}^{m}.
$
We use the following notations for the Hermitian components of the operator
$$
 \mathfrak{Re}A=\frac{A+A^{\ast}}{2},\;\mathfrak{Im}A=\frac{A-A^{\ast}}{2i},\;\bar{A}:=\{\bar{a}_{sj} \},\;\mathrm{Re} (A):=\{\mathrm{Re}a_{sj} \},\;\mathrm{Im} (A):=\{\mathrm{Im}a_{sj} \},
$$
 the latter matrices are called by the real and imaginary part of the matrix $A$ respectively.
The detailed information on the Hermitian components properties is given in the paper \cite{firstab_lit Math2024}. Denote by    $    \mathrm{D}   (A),\,   \mathrm{R}   (A),\,\mathrm{N}(A)$      the    domain of definition, the  range,  and the  kernel or   null space  of the  operator $A$ respectively.

\section{Coupled unsupervised manifold alignment }

The manifold alignment represents a solution to the problem
of alignment and at the same time forms the basis for finding a unified   representation of different  data sets.
The principal  concept of manifold alignment is  to use the relationships between objects
within each data set to obtain the information on the relationships between data sets and
eventually to create a mapping of initially different  data sets into a common  latent space.  The approaches described in this paragraph deal with different data sets  having  the same underlying structure. The basic low-dimensional
representation is extracted by modeling the local geometry in the complex vector space using the generalized graph Laplacian  operator associated to the  data sets naturally coupled on the complex plane.
Apparently, the alignment of manifolds can be considered as a method of reducing the dimension of the latent space \cite{Ham2003},
where the goal is to find a low-dimensional embedding of different data sets that preserves
any known correspondences between them. Although we consider unsupervised methods, they can be easily modified to   semi-supervised or supervised methods due to the  corresponding term reflecting the distance from the chosen initial point in the sense of the given norm. In the reduced simplified form, we can formulate  the coupled  problem of dimensional lowering as follows.  Given  $n$ elements
$$
\mathbf{x}_{1},\mathbf{x}_{2},...,\mathbf{x}_{n}\in \mathbb{C}^{l},\;\mathbf{x}_{j}=(x_{1j}, x_{2j},...,x_{lj})^{T},\; x_{sj} \in \mathbb{C},\;l\in \mathbb{N},
$$
we are challenged to fined a set of the elements
$$
\mathbf{y}_{1},\mathbf{y}_{2},...,\mathbf{y}_{n}\in \mathbb{C}^{m},\,1\leq m\leq l,
$$
so that the element   $\mathbf{y}_{j}$ is the image of the element $\mathbf{x}_{j}$ under the searched mapping, i.e.
$
\mathbf{x}_{j}\rightarrow \mathbf{y}_{j}.
$
 Moreover, we are interested in finding a mapping in which the images would be located close to each other in some sense. In the case corresponding to the Euclidian space
the desired mapping can be created due to the subspace generated by the generalized eigenvectors of the graph Laplacian operator. Below, we represent the detailed analysis of the algorithm solving this problem in the case corresponding to the complex Euclidian space what simultaneously solves the problem of coupling  different data sets.

\subsection{Coupled Laplacian  mapping}

Throughout the paper, we consider a   symmetric   matrix $W\in \mathbb{C}^{ n\times n },\,W=\{W_{sj}\}$ having nonnegative real and imaginary parts,
define a diagonal matrix
$$
D=\{D_{sj}\},\;D_{sj}=\left\{ \begin{aligned}
  \sum\limits_{s=1}^{n}W_{sj},\;s=j \\
 0,\;s\neq j   \\
\end{aligned}
 \right.  ,
$$
further we assume that $D_{jj}\neq 0,\;j=1,2,...,n.$
 Consider an operator  $L:=D-W.$ Note that $L$ is not selfadjoint, the verification is left to the reader, however  it has the following property $L^{\ast}=\bar{L}.$
Since  $ D$   is a diagonal matrix, then  it commutes with an  arbitrary matrix of the corresponding dimension. In accordance with the symmetry property, we have
$$
\mathrm{Re}L_{ss}=\sum\limits_{s\neq j}\mathrm{Re} W_{sj},\;s=1,2,..n,
$$
therefore the matrix $\mathrm{Re}(L)$ is a symmetric  diagonally dominant. Hence, it is  positive semidefinite, i.e. $\mathfrak{Re}L\geq 0.$ The analogous reasonings leads to the fact   $\mathfrak{Im}L\geq 0.$
The following lemma establishes a technical tool relevant in the  further reasonings.
\begin{lem}\label{L1} Assume that $W\in \mathbb{C}^{ n\times n }, $   then  the following relation holds
$$
  \mathrm{tr}\{ A ^{ \ast } L  A \}=  \sum\limits_{j=1}^{m}       ( L\mathbf{a}^{\cdot}  _{ j },       \mathbf{a}^{\cdot}  _{ j })_{\mathbb{C}^{n}} =\frac{1}{2} \sum\limits_{s,j=1}^{n}\|\mathbf{a}_{s}-\mathbf{a}_{j}\|_{\mathbb{E}^{m}}^{2}W_{sj}.
$$
\end{lem}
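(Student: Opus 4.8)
The plan is to read off both claimed equalities by a direct computation in the matrix entries, the only genuine structural inputs being the \emph{symmetry} of $W$ (note: symmetry $W_{st}=W_{ts}$, not Hermitian symmetry) together with the sesquilinearity of the inner product $(\cdot,\cdot)_{\mathbb{C}^{n}}$, which is conjugate-linear in its second slot. Throughout I keep the row index of $A\in\mathbb{C}^{n\times m}$ (ranging over the $n$ graph vertices, producing the rows $\mathbf{a}_{s}\in\mathbb{C}^{m}$) strictly separate from the column index (ranging over $m$, producing the columns $\mathbf{a}^{\cdot}_{j}\in\mathbb{C}^{n}$), and I rename the second vertex index in the last sum as $t$ to avoid the clash with the column index $j$.

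First I would dispatch the left equality, which is purely formal. Writing the trace as the sum of diagonal entries, $\mathrm{tr}\{A^{\ast}LA\}=\sum_{j=1}^{m}(A^{\ast}LA)_{jj}$, and expanding $(A^{\ast}LA)_{jj}=\sum_{p,q}\bar{a}_{pj}L_{pq}a_{qj}$, one recognizes the inner sum as exactly $(L\mathbf{a}^{\cdot}_{j},\mathbf{a}^{\cdot}_{j})_{\mathbb{C}^{n}}$, because the conjugation sits on $\bar{a}_{pj}$, i.e. on the second slot of the inner product, and $\mathbf{a}^{\cdot}_{j}$ is the $j$-th column of $A$. Summing over $j$ gives the first identity.

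For the right equality the key manoeuvre is to interchange the two summations. Expanding $\sum_{j=1}^{m}(L\mathbf{a}^{\cdot}_{j},\mathbf{a}^{\cdot}_{j})_{\mathbb{C}^{n}}=\sum_{j=1}^{m}\sum_{s,t=1}^{n}L_{st}a_{tj}\bar{a}_{sj}$ and performing the $j$-summation first, the inner sum $\sum_{j}a_{tj}\bar{a}_{sj}$ collapses to the inner product of rows $(\mathbf{a}_{t},\mathbf{a}_{s})_{\mathbb{C}^{m}}$, so the whole quantity equals $\sum_{s,t=1}^{n}L_{st}(\mathbf{a}_{t},\mathbf{a}_{s})_{\mathbb{C}^{m}}$. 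Substituting $L=D-W$ and using that $D$ is diagonal with $D_{ss}=\sum_{t}W_{st}$ converts the diagonal contribution into $\sum_{s,t}W_{st}\|\mathbf{a}_{s}\|^{2}_{\mathbb{E}^{m}}$, leaving the compact form $\sum_{s,t}W_{st}\|\mathbf{a}_{s}\|^{2}-\sum_{s,t}W_{st}(\mathbf{a}_{t},\mathbf{a}_{s})$.

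Finally I would run the target from the other end: expanding $\|\mathbf{a}_{s}-\mathbf{a}_{t}\|^{2}=\|\mathbf{a}_{s}\|^{2}+\|\mathbf{a}_{t}\|^{2}-(\mathbf{a}_{s},\mathbf{a}_{t})-(\mathbf{a}_{t},\mathbf{a}_{s})$ and inserting it into $\tfrac12\sum_{s,t}W_{st}\|\mathbf{a}_{s}-\mathbf{a}_{t}\|^{2}$. At this point the symmetry of $W$ is indispensable: relabeling $s\leftrightarrow t$ gives $\sum W_{st}\|\mathbf{a}_{t}\|^{2}=\sum W_{st}\|\mathbf{a}_{s}\|^{2}$ and $\sum W_{st}(\mathbf{a}_{s},\mathbf{a}_{t})=\sum W_{st}(\mathbf{a}_{t},\mathbf{a}_{s})$, which absorbs the factor $\tfrac12$ and reproduces exactly the expression obtained above. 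I expect the only place to stumble is the bookkeeping — keeping the vertex index separate from the latent index, and remembering that the collapse rests on symmetry rather than self-adjointness; indeed the relabeling introduces \emph{no} complex conjugate on $W_{st}$, which is precisely why a merely symmetric (non-Hermitian) complex $W$ suffices for the identity to hold.
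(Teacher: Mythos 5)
Your proposal is correct, and both equalities are established by the same kind of direct entry-wise computation as in the paper; but your organization of the second equality genuinely differs from the paper's. The paper fixes a coordinate $q$ and splits the weighted sum $\sum_{s,j}|a_{sq}-a_{jq}|^{2}W_{sj}$ into its real and imaginary parts, handling each separately via the real symmetric matrices $\mathrm{Re}\,W$ and $\mathrm{Im}\,W$ (its displays \eqref{1b} and \eqref{2b}), and only afterwards recombines them into the complex identity. You instead keep the computation complex throughout: you sum over the latent index first, reduce everything to row inner products $(\mathbf{a}_{t},\mathbf{a}_{s})_{\mathbb{C}^{m}}$, and close the argument with a single $s\leftrightarrow t$ relabeling that uses only the transposition symmetry $W_{st}=W_{ts}$, with no conjugation placed on $W$. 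This buys two things: the proof is shorter and free of the real/imaginary bookkeeping (where the paper's version accumulates some notational slips), and it makes transparent exactly which hypothesis carries the identity --- plain symmetry of the complex matrix $W$, not Hermitian symmetry and not positivity of $\mathrm{Re}\,W$, $\mathrm{Im}\,W$. The paper's decomposition, on the other hand, produces the separate real and imaginary identities that it reuses later (e.g., in the proof of Theorem \ref{T2}), so its detour is not without purpose; your argument would need those two scalar identities extracted as corollaries if they are wanted downstream.
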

\begin{proof}
By direct calculation, we have
$$
\mathrm{tr}\{ A ^{ \ast } L  A \}=\sum\limits_{q=1}^{m}\sum\limits_{s,j=1}^{n}L_{sj}\overline{a}_{sq}a_{jq}.
$$
It is clear that
$$
\sum\limits_{s,j=1}^{n}L_{sj}\overline{a}_{sq}a_{jq}=\sum\limits_{s =1}^{n}L_{ss}|a_{sq}|^{2}+ \sum\limits_{\underset{s,j=1}{s\neq j}}^{n}L_{sj}\overline{a}_{sq}a_{jq}=
\sum\limits_{s =1}^{n}D_{ss}|a _{sq}|^{2}-\sum\limits_{s =1}^{n}W_{ss}|a_{sq}|^{2}+ \sum\limits_{\underset{s,j=1}{s\neq j}}^{n}L_{sj}\overline{a}_{sq}a_{jq}=
$$
\begin{equation}\label{1}
= \sum\limits_{s =1}^{n}D_{ss}|a_{sq}|^{2}-\sum\limits_{s =1}^{n}W_{ss}|a_{sq}|^{2}- \sum\limits_{\underset{s,j=1}{s\neq j}}^{n}W_{sj} \overline{a}_{sq}a_{jq}=\sum\limits_{s =1}^{n}D_{ss}|a_{sq}|^{2}- \sum\limits_{ s,j=1 }^{n}\overline{a}_{sq} W_{sj}a_{jq}= ( L \mathbf{a}^{\cdot}  _{ q },    \mathbf{a}^{\cdot}  _{ q })_{\mathbb{C}^{n}}.
\end{equation}
On the other hand
$$
\mathrm{Re} \sum\limits_{s,j=1}^{n}|a_{sq}-a_{jq}|^{2}W_{sj}=
\mathrm{Re}\left\{\sum\limits_{s,j=1}^{n}\left(|a_{sq}|^{2}+|a_{jq}|^{2}-2\mathrm{Re}\{a_{sq}\bar{a}_{jq}\}\right)W_{sj}\right\}=
$$
\begin{equation}\label{1b}
= 2\mathrm{Re}\left\{\sum\limits_{s=1}^{n}\mathrm{Re}D_{ss} |a_{s}|^{2} - \sum\limits_{s,j=1}^{n} a_{sq}\bar{a}_{jq} \mathrm{Re}W_{sj}\right\}
=2 (\mathbf{a}_{q}^{\cdot},  Re L \mathbf{a}^{\cdot}_{q})_{\mathbb{C}^{n}}=2 \mathrm{Re} (L \mathbf{a}^{\cdot}_{q},  \mathbf{a}^{\cdot}_{q})_{\mathbb{C}^{n}}.
 \end{equation}
Analogously, we get
$$
\mathrm{Im} \sum\limits_{s,j=1}^{n}|a_{sq}-a_{jq}|^{2}W_{sj}=\mathrm{Im}\left\{ \sum\limits_{s,j=1}^{n}\left(|a_{sq}|^{2}+|a_{jq}|^{2}-2\mathrm{Re}\{a_{sq}\bar{a}_{jq}\}\right)W_{sj}\right\}=
$$
\begin{equation}\label{2b}
 =\mathrm{Re}\left\{\sum\limits_{s=1}^{n}\mathrm{Im}D_{ss} |a_{s}|^{2} - \sum\limits_{s,j=1}^{n} a_{sq}\bar{a}_{jq} \mathrm{Im}W_{sj}\right\} = 2\mathrm{Re}(\mathbf{a}^{\cdot}_{q},   Im  L\mathbf{a}^{\cdot}_{q})_{\mathbb{C}^{n}}=2\mathrm{Im}(L\mathbf{a}^{\cdot}_{q},      \mathbf{a}^{\cdot}_{q})_{\mathbb{C}^{n}}.
\end{equation}
Combining the above  relations, we get
 $$
\sum\limits_{q=1}^{m}       ( L\mathbf{a}^{\cdot}  _{ q },       \mathbf{a}^{\cdot}  _{ q })_{\mathbb{C}^{n}}=\frac{1}{2}\sum\limits_{q=1}^{m} \sum\limits_{s,j=1}^{n}W_{sj}|a_{sq}-a_{jq} |^{2}= \frac{1}{2} \sum\limits_{s,j=1}^{n}W_{sj}\sum\limits_{q=1}^{m}|a_{sq}-a_{jq} |^{2}=\frac{1}{2} \sum\limits_{s,j=1}^{n}\|\mathbf{a}_{s}-\mathbf{a}_{j}\|_{\mathbb{C}^{m}}^{2}W_{ij}.
$$
Using formula \eqref{1}, we obtain   the desired result.
\end{proof}

\begin{teo}\label{T1}  Assume that $W\in \mathbb{C}^{ n\times n } ,\,A\in \mathbb{C}^{n\times n},\,
A^{\ast}DA= e^{i\theta}\cdot I,
$
then the following relation holds
$$
 \mathrm{tr}\{ A ^{ \ast } L  A \} = 2e^{i\theta} \cdot \mathrm{tr}\{D^{-1}L\}.
$$
\end{teo}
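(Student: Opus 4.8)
The plan is to avoid unpacking $L=D-W$ into its entries and instead to exploit the cyclic invariance of the trace together with the algebraic content of the normalization hypothesis $A^{\ast}DA=e^{i\theta}I$. The guiding observation is that the right-hand side $\mathrm{tr}\{D^{-1}L\}$ already has the shape $\mathrm{tr}\{L\cdot(\text{something})\}$, so I would try to convert $A^{\ast}LA$, under the trace, into $L\cdot AA^{\ast}$ and then identify $AA^{\ast}$ with a scalar multiple of $D^{-1}$.

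First I would record that the hypothesis forces $A$ to be invertible: since $D$ is diagonal with $D_{jj}\neq 0$ it is invertible, and $e^{i\theta}I$ is invertible, so $A^{\ast}DA=e^{i\theta}I$ cannot hold for a singular $A$. This is the point at which the standing assumption $D_{jj}\neq 0$ is genuinely used, and it is what legitimizes the subsequent manipulations in the non-Hermitian complex setting; note that $L$ itself is not selfadjoint, but this plays no role, since the cyclic trace identities hold for arbitrary matrices. Next I would solve the constraint for $AA^{\ast}$: left-multiplying $A^{\ast}DA=e^{i\theta}I$ by $(A^{\ast})^{-1}$ and right-multiplying by $A^{-1}$ gives $D=e^{i\theta}(AA^{\ast})^{-1}$, hence $AA^{\ast}=e^{i\theta}D^{-1}$.

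With this in hand the computation is a one-line application of cyclicity, $\mathrm{tr}\{A^{\ast}LA\}=\mathrm{tr}\{L\,AA^{\ast}\}=e^{i\theta}\,\mathrm{tr}\{LD^{-1}\}=e^{i\theta}\,\mathrm{tr}\{D^{-1}L\}$, the last equality again being cyclicity. (One could instead mimic the entrywise computation behind Lemma~\ref{L1}, but the cyclic route is shorter and avoids re-expanding the weighted distance sum.)

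The one part I expect to need care is pinning down the multiplicative constant: the cyclicity argument above produces the factor $e^{i\theta}$, so reconciling it with the factor $2e^{i\theta}$ in the statement is the delicate bookkeeping step. I would therefore re-examine the conventions entering $\mathrm{tr}\{D^{-1}L\}$, in particular whether the diagonal weights $W_{jj}$ are counted in $D_{jj}$ and whether the normalization on the right-hand side is taken over the real and imaginary channels separately; in view of the factor-two pattern already visible in \eqref{1b}--\eqref{2b} of Lemma~\ref{L1}, the latter is the natural place for an extra factor of $2$ to originate. Testing a small instance, say $n=2$ with $D=I$ and $A$ unitary, is the quickest way to fix the constant unambiguously and to confirm which reading of the hypotheses the stated identity intends.
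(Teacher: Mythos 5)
Your proof is correct, and it takes a genuinely different route from the paper's. The paper argues by splitting $D$ into Hermitian components $\mathfrak{Re}D+i\,\mathfrak{Im}D$, deducing from the constraint that $A^{\ast}\mathfrak{Re}D\,A=\cos\theta\cdot I$ and $A^{\ast}\mathfrak{Im}D\,A=\sin\theta\cdot I$ (relation \eqref{5r}), so that the systems $\{\sqrt{\mathfrak{Re}D}\,\mathbf{a}^{\cdot}_{j}\}$ and $\{\sqrt{\mathfrak{Im}D}\,\mathbf{a}^{\cdot}_{j}\}$ are orthogonal, and then evaluating $\sum_{j}(D^{-1}L\sqrt{\mathfrak{Re}D}\,\mathbf{a}^{\cdot}_{j},\sqrt{\mathfrak{Re}D}\,\mathbf{a}^{\cdot}_{j})_{\mathbb{C}^{n}}$ and its imaginary counterpart via the trace theorem for orthonormal systems, producing $(\cos\theta+i\sin\theta)\,\mathrm{tr}\{D^{-1}L\}$. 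Your route---invertibility of $A$ forced by the constraint, the identity $AA^{\ast}=e^{i\theta}D^{-1}$, then cyclicity of the trace---is shorter and, more importantly, avoids two unsound steps in the paper's argument: the insertion $(L\mathbf{a}^{\cdot}_{j},\mathbf{a}^{\cdot}_{j})_{\mathbb{C}^{n}}=(D^{-1}LD\mathbf{a}^{\cdot}_{j},\mathbf{a}^{\cdot}_{j})_{\mathbb{C}^{n}}$, which needs $LD=DL$ (false in general: a diagonal non-scalar $D$ does not commute with arbitrary matrices, and $D$ does not commute with $W$), and the opening display, where the paper writes $\frac{1}{2}\,\mathrm{tr}\{A^{\ast}LA\}=\sum_{j}(L\mathbf{a}^{\cdot}_{j},\mathbf{a}^{\cdot}_{j})_{\mathbb{C}^{n}}$ ``applying Lemma \ref{L1}'' even though Lemma \ref{L1} asserts this identity \emph{without} the $\frac{1}{2}$.

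On the bookkeeping question you flag: no reading of the conventions rescues the factor $2$; the theorem as printed is off by exactly that factor, and your constant $e^{i\theta}$ is the correct one. The spurious $\frac{1}{2}$ just described is precisely where the paper's factor $2$ is born, and it propagates unchanged to the final claim; your guess that it originates in the real/imaginary-channel bookkeeping of \eqref{1b}--\eqref{2b} is close but not quite right, since Lemma \ref{L1} itself is stated consistently. The test you propose settles the constant: take $n=2$, $W_{12}=W_{21}=1$, $W_{11}=W_{22}=0$, so $D=I$; take $A=I$, $\theta=0$, so $A^{\ast}DA=e^{i\theta}I$ holds. Then $\mathrm{tr}\{A^{\ast}LA\}=\mathrm{tr}\{L\}=2$, while $2e^{i\theta}\,\mathrm{tr}\{D^{-1}L\}=2\,\mathrm{tr}\{L\}=4$. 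So your argument should be read as establishing the corrected statement $\mathrm{tr}\{A^{\ast}LA\}=e^{i\theta}\,\mathrm{tr}\{D^{-1}L\}$; the discrepancy is an error in the paper's statement and proof, not a gap in yours.
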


\begin{proof} Consider a decomposition
$$
D=\mathfrak{Re}D+i \mathfrak{Im} D.
$$
Note that $D_{jj}\neq 0,\,j=1,2,...,n,$ therefore the matrix $D$ is invertible and commutes, as well as its inverse, with an arbitrary matrix of the corresponding dimension.
Thus, applying Lemma \ref{L1},  we get
 $$
 \frac{1}{2}\mathrm{tr}\{ A ^{ \ast } L  A\}= \sum\limits_{j=1}^{n} (L\mathbf{a}^{\cdot}_{j},      \mathbf{a}^{\cdot}_{j})_{\mathbb{C}^{n}}= \sum\limits_{j=1}^{n}  (D^{-1}  LD\mathbf{a}^{\cdot}_{j},      \mathbf{a}^{\cdot}_{j})_{\mathbb{C}^{n}} =
 $$
 $$
 =\sum\limits_{j=1}^{n}  (D^{-1}  L  \mathfrak{Re}  D\mathbf{a}^{\cdot}_{j},      \mathbf{a}^{\cdot}_{j})_{\mathbb{C}^{n}}+ i\sum\limits_{j=1}^{n}  (D^{-1}  L  \mathfrak{Im}  D\mathbf{a}^{\cdot}_{j},      \mathbf{a}^{\cdot}_{j})_{\mathbb{C}^{n}}.
 $$
  Using the condition $\mathrm{Re} W,\,\mathrm{Im} W \geq0$ and the symmetry property, we can define the nonnegative  selfadjoint  square roots  and rewrite the previous relation in the following form
\begin{equation}\label{2}
   \frac{1}{2}\mathrm{tr}\{ A ^{ \ast } L  A \}=\sum\limits_{j=1}^{n}  (D^{-1}  L \sqrt{ \mathfrak{Re}  D}\mathbf{a}^{\cdot}_{j},      \sqrt{ \mathfrak{Re}  D}\mathbf{a}^{\cdot}_{j})_{\mathbb{C}^{n}}+ i\sum\limits_{j=1}^{n}  (D^{-1}  L \sqrt{ \mathfrak{Im}  D}\mathbf{a}^{\cdot}_{j},     \sqrt{ \mathfrak{Im}  D} \mathbf{a}^{\cdot}_{j})_{\mathbb{C}^{n}}.
 \end{equation}
Note that the condition $A^{\ast}DA= e^{i\theta}\cdot I$ can be rewritten in the form
 $$
 \psi_{sq}:=\sum \limits_{j=1}^{n}D_{jj}\bar{a}_{js}a_{jq} =\delta_{sq}e^{i\theta},\,s,q=1,2,...,n,
 $$
 where $A^{\ast}DA=\left\{\psi_{sq}\right\}.$ It is clear that
$
A^{\ast}D^{\ast}A=e^{-i\theta}\cdot I,
$
therefore
\begin{equation}\label{5r}
A^{\ast} \mathfrak{Re} DA=\frac{1}{2}\left\{ A^{\ast}   DA+ A^{\ast}   D^{\ast}A\right\}= \cos\theta\cdot I,\;A^{\ast} \mathfrak{Im} DA=\sin\theta\cdot I.
\end{equation}
The latter  can be rewritten in the form
$$
  ( \sqrt{ \mathfrak{Re}  D}\mathbf{a}^{\cdot}_{s},     \sqrt{ \mathfrak{Re}  D} \mathbf{a}^{\cdot}_{j})_{\mathbb{C}^{n}}= \cos\theta \cdot  \delta_{sj},\;
( \sqrt{ \mathfrak{Im}  D}\mathbf{a}^{\cdot}_{s},     \sqrt{ \mathfrak{Im}  D} \mathbf{a}^{\cdot}_{j})_{\mathbb{C}^{n}}=\sin\theta\cdot\delta_{sj}, \;\;s,j=1,2,...,n.
$$
 It follows from  the conditions that the  following sets
  $$
   \left\{ \sqrt{   \mathfrak{Re}  D} \mathbf{a}^{\cdot}_{j}\right\} ,\; \left\{ \sqrt{   \mathfrak{Im}  D} \mathbf{a}^{\cdot}_{j}\right\},\;j=1,2,...,n
  $$
are  orthogonal  in $\mathbb{C}^{n},$ moreover in accordance with \eqref{5r} they become  orthonormal  if we  consider  the corresponding multipliers.    Applying the well-known theorem on the finite-dimensional operator trace,     taking into account  \eqref{2}, we obtain the desired result.
\end{proof}

\subsubsection{Generalized eigenvectors}

Assume additionally that the matrix  $W\in \mathbb{C}^{n\times n}$ is normal,  let us show that      the corresponding operator  $L=D-W$   is normal. We have
$$
L^{\ast}L=(H_{1}- iH_{2})(H_{1}+iH_{2})=H_{1}^{2}  +H_{2}^{2}  + iH_{1} H_{2} -iH_{2}H_{1},
$$
where
$
H_{1}:=\mathfrak{Re} L=\mathfrak{Re} D-\mathfrak{Re} W,\;H_{2}:=\mathfrak{Im} L=\mathfrak{Im}D-\mathfrak{Im}W.
$
Note that $\mathfrak{Re} D$ and $\mathfrak{Im} D$ are diagonal, therefore they  commute with an  arbitrary matrix of the corresponding dimension. Hence, using the commutative property  of the Hermitian components of the normal operator, i.e. $H_{1}H_{2}=H_{2}H_{1},$ we obtain the desired result.

Having generalized the corresponding selfadjoint operator \cite{Belkin2003}, we will call the operator $L$  by   the generalized  graph  Laplacian operator. Consider the following generalized eigenvalue problem
\begin{equation}\label{1s}
 L\mathbf{e}=\lambda  D\mathbf{e},\,\mathbf{e}\in \mathbb{C}^{n},\,\lambda\in \mathbb{C},
\end{equation}
where $ L$ is the generalized  graph Laplacian  operator. Assume that the following condition  can be imposed upon the solutions of the problem \eqref{1s}, i.e.
$$
(D\mathbf{e} ,\mathbf{e} )_{\mathbb{C}^{n}} =e^{i\theta},\; \theta\in (0,\pi/2),
$$ then the solutions are called by the generalized eigenvectors of the operator $L.$  Denote by $\mathfrak{N}(L)$ the subspace generated by the generalized eigenvectors of the operator $L.$ We have the following decomposition
\begin{equation}\label{7}
\mathbb{C}^{n}=\mathfrak{N}(L)\stackrel{\cdot}{+}\mathrm{N}(L).
\end{equation}
To prove this fact we should  use   the decomposition formula for an arbitrary bounded operator acting in $\mathbb{C}^{n},$   i.e.
$$
\mathbb{C}^{n}= \mathrm{R}(L^{\ast})\stackrel{\cdot}{+}\mathrm{N}(L),
$$
Since  the operator $D^{-1}L$ is normal, then $\mathfrak{N}(L)=\mathfrak{N}(L^{\ast})=\mathrm{R}(L^{\ast}),$ the latter leads   to \eqref{7}. Denote
$$
\xi:=\mathrm{dim}\{\mathfrak{N}(L)\},\;\mu:=\mathrm{dim}\{\mathrm{N}(L)\},
$$
then in accordance with \eqref{7}, we have  $\xi+\mu=n.$
Consider a set the generalized eigenvectors
$$
 L \mathbf{e}_{1}=\lambda_{1}  D \mathbf{e}_{1},\;\; L \mathbf{e}_{2}=\lambda_{2}  D \mathbf{e}_{2},...,\;L \mathbf{e}_{n}=\lambda_{n}  D \mathbf{e}_{n},
 $$
numbered in accordance with the  order of the corresponding eigenvalues
 $$
|\lambda_{1}|<  |\lambda_{2}|<...< |\lambda_{\xi}|,\; \lambda_{n-\mu+1} =\lambda_{n-\mu+2}=...=\lambda_{n}  =0.
$$
We have the following implication
$$
L  \mathbf{e}_{j}= \lambda_{j}  D  \mathbf{e}_{j},\Rightarrow L^{\ast}  \mathbf{e}_{j}=\overline{\lambda}_{j}   D^{\ast} \mathbf{e}_{j},\;j=1,2,...,\xi.
$$
This fact can be proved easily if we notice  that $D^{-1}L$  is a normal operator.
Let us show that the generalized  eigenvectors  satisfy the condition
\begin{equation}\label{8v}
(D\mathbf{e}_{s},\mathbf{e}_{j})_{\mathbb{C}^{n}}=e^{i\theta}\cdot\delta_{sj}.
\end{equation}
For this purpose, note that the case $s=j$ is given due to the definition,  assume that $s\neq j$   and consider the following reasonings
$$
\lambda_{s}(D\mathbf{e}_{s},\mathbf{e}_{j})_{\mathbb{C}^{n}}=(L\mathbf{e}_{s},\mathbf{e}_{j})_{\mathbb{C}^{n}}=(\mathbf{e}_{s}, L^{\ast} \mathbf{e}_{j})_{\mathbb{C}^{n}}
=(\mathbf{e}_{s},\overline{\lambda}_{j} D^{\ast} \mathbf{e}_{j})_{\mathbb{C}^{n}}= \lambda_{j}(D\mathbf{e}_{s}, \mathbf{e}_{j})_{\mathbb{C}^{n}},
$$
taking into account the fact $\lambda_{s}\neq \lambda_{j}$ we obtain  the desired result. Note that the multiplier $e^{-i\theta}$ defines the unitary operator and it is clear that the operator
 $L_{1}:=e^{-i\theta}L$ is normal. Denote by $\{\beta_{j}\}_{1}^{\xi},\;\{\gamma_{j}\}_{1}^{\xi}$ the set of the eigenvalues $\{\lambda_{j}\}_{1}^{\xi}$ numerated as follows
$$
\,\forall  \{ \beta_{j},\gamma_{j}\}, \exists \{\lambda_{p},\lambda_{q}\}:\;\mathrm{Re} \beta_{j}=\mathrm{Re} \lambda_{p},\,\mathrm{Im}\gamma_{j}=\mathrm{Im} \lambda_{q},\;j,p,q\in\{1,2,...,\xi\} ,
$$
$$
 \mathrm{Re} \beta_{j}\leq \mathrm{Re} \beta_{j+1} ,\;\mathrm{Im} \gamma_{j}\leq \mathrm{Im} \gamma_{j+1},\,j=1,2,...,\xi-1.
$$
It is clear that we can enumerate the sets of the generalized eigenvectors in accordance with  the given order so that the sets
$$
 \mathbf{g}_{ 1 },\mathbf{g}_{2},...,\mathbf{g}_{\xi},\;\; \mathbf{s}_{ 1 },\mathbf{s}_{2},...,\mathbf{s}_{\xi}
$$
correspond to the real and imaginary parts of eigenvalues  respectively.
Denote by
$$
 \mathfrak{N}_{\xi}:=\mathfrak{N}(L),\;\mathfrak{N}_{m}, \; m=1,2,...,\xi
$$
the subspace  generated by the generalized  eigenvectors $\mathbf{e}_{ 1 },\mathbf{e}_{2},...,\mathbf{e}_{m}.$   The proof of the  following theorem represents  the modified concept of the gradient descent method.
\begin{teo}\label{T2} Assume that the  matrix $W\in \mathbb{C}^{n\times n}$ is normal,   then the following relation holds
\begin{equation}\label{9}
\min\limits_{A^{\ast}DA=e^{i\theta}I}\sum\limits_{j=1}^{m}\mathrm{Re}(L_{1}\mathbf{a}^{\cdot}_{j},\mathbf{a}^{\cdot}_{j})_{\mathbb{C}^{n}}=
\sum\limits_{j=1}^{m}\mathrm{Re}\beta_{j},\;
\min\limits_{A^{\ast}DA=e^{i\theta}I}\sum\limits_{j=1}^{m}\mathrm{Im}(L_{1}\mathbf{a}^{\cdot}_{j},\mathbf{a}^{\cdot}_{j})_{\mathbb{C}^{n}}=\sum\limits_{j=1}^{m}\mathrm{Im}\gamma_{j}.
\end{equation}
\end{teo}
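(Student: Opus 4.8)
The plan is to reduce both the constraint and the quadratic form to diagonal shape by passing to the basis of generalized eigenvectors, after which the assertion collapses to the classical Ky Fan minimum principle. First I would assemble the generalized eigenvectors into the matrix $E=[\mathbf{e}_{1},\dots,\mathbf{e}_{n}]$, which is invertible since by \eqref{7} these vectors span $\mathbb{C}^{n}$. The $D$-orthonormality relation \eqref{8v} reads $E^{\ast}DE=e^{i\theta}I$, while from the defining problem \eqref{1s} one gets $(L\mathbf{e}_{j},\mathbf{e}_{s})_{\mathbb{C}^{n}}=\lambda_{j}e^{i\theta}\delta_{sj}$, whence $E^{\ast}L_{1}E=e^{-i\theta}E^{\ast}LE=\Lambda$ with $\Lambda=\mathrm{diag}(\lambda_{1},\dots,\lambda_{n})$. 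Thus in the $E$-coordinates the matrix $D$ becomes the scalar $e^{i\theta}I$ and $L_{1}$ becomes diagonal.

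Next I would perform the substitution $A=EC$, $C=E^{-1}A$. The constraint $A^{\ast}DA=e^{i\theta}I$ then becomes $e^{i\theta}C^{\ast}C=e^{i\theta}I$, i.e. $C$ is unitary; equivalently, the first $m$ columns $\mathbf{c}_{1},\dots,\mathbf{c}_{m}$, which are the only ones entering the objective, form an orthonormal system that extends to a unitary matrix. Writing $\mathbf{a}^{\cdot}_{j}=E\mathbf{c}_{j}$ and using $E^{\ast}L_{1}E=\Lambda$, each summand transforms into $(L_{1}\mathbf{a}^{\cdot}_{j},\mathbf{a}^{\cdot}_{j})_{\mathbb{C}^{n}}=(\Lambda\mathbf{c}_{j},\mathbf{c}_{j})_{\mathbb{C}^{n}}=\sum_{k=1}^{n}\lambda_{k}|c_{kj}|^{2}$. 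Summing over $j$ and taking real parts yields $\sum_{j=1}^{m}\mathrm{Re}(L_{1}\mathbf{a}^{\cdot}_{j},\mathbf{a}^{\cdot}_{j})_{\mathbb{C}^{n}}=\sum_{k=1}^{n}(\mathrm{Re}\lambda_{k})\,p_{k}$, where $p_{k}:=\sum_{j=1}^{m}|c_{kj}|^{2}$.

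The final step is a linear optimization over the numbers $p_{k}$. Since $p_{k}$ is the $k$-th diagonal entry of the orthogonal projection $\hat{C}\hat{C}^{\ast}$ onto the span of the orthonormal columns $\mathbf{c}_{1},\dots,\mathbf{c}_{m}$, one has $0\le p_{k}\le 1$ and $\sum_{k=1}^{n}p_{k}=\mathrm{tr}(\hat{C}^{\ast}\hat{C})=m$. Minimizing the linear functional $\sum_{k}(\mathrm{Re}\lambda_{k})p_{k}$ over this polytope is achieved by assigning $p_{k}=1$ to the $m$ indices carrying the smallest real parts and $p_{k}=0$ to the rest, which by the very definition of the ordering $\{\beta_{j}\}$ gives the lower bound $\sum_{j=1}^{m}\mathrm{Re}\beta_{j}$. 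Equality is attained by taking the first $m$ columns of $A$ to be the reordered eigenvectors $\mathbf{g}_{1},\dots,\mathbf{g}_{m}$, for which $(L_{1}\mathbf{g}_{j},\mathbf{g}_{j})_{\mathbb{C}^{n}}=\lambda_{j}$ and the constraint holds by \eqref{8v}. The imaginary-part identity follows verbatim with $\mathrm{Re}\lambda_{k}$ replaced by $\mathrm{Im}\lambda_{k}$ and $\beta_{j}$ by $\gamma_{j}$.

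I expect the main obstacle to be the bookkeeping that identifies the feasible set after the substitution with the set of diagonals of rank-$m$ orthogonal projections, namely verifying $0\le p_{k}\le 1$ together with $\sum_{k}p_{k}=m$, since this is precisely what turns the constrained quadratic minimization into the transparent linear program. The simultaneous diagonalization and the final allocation are then routine, and notably the argument does not require the eigenvalues to be distinct, so the strict ordering $|\lambda_{1}|<\dots<|\lambda_{\xi}|$ is not needed here.
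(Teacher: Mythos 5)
Your core reduction coincides with the paper's: both expand the columns $\mathbf{a}^{\cdot}_{j}$ in the generalized eigenvector basis, both turn the constraint $A^{\ast}DA=e^{i\theta}I$ into orthonormality of the coefficient columns via \eqref{8v}, and both reduce the objective to $\sum_{k}(\mathrm{Re}\lambda_{k})p_{k}$ with $p_{k}=\sum_{j=1}^{m}|c_{kj}|^{2}$. Where you genuinely differ is the finish: you close with the classical Ky Fan argument, observing that the feasible $p_{k}$ are exactly the diagonal entries of a rank-$m$ orthogonal projection, hence $0\le p_{k}\le 1$ and $\sum_{k}p_{k}=m$, and then solve a transparent linear program. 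The paper instead runs an iterative variable-elimination scheme (its ``modified gradient descent''): it substitutes out the eigenvalue of largest real part, argues from the sign of a constant gradient that the minimum lies on the hyperplane $x_{\xi}=0$, repeats down to $\xi=m$, and only at the last step invokes unitarity of the square coefficient matrix to force $x_{s}=1$. Your finish is cleaner, makes the hidden constraint $p_{k}\le 1$ explicit (the paper never states it, which is precisely what makes its intermediate reductions delicate), and, as you note, does not need the strict ordering of the $|\lambda_{j}|$.

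However, there is a genuine discrepancy in your treatment of the kernel of $L$, and it changes the value of the minimum. You diagonalize over \emph{all} $n$ generalized eigenvectors, including the $\mu=\mathrm{dim}\,\mathrm{N}(L)$ vectors with eigenvalue $0$, whereas the paper expands $\mathbf{a}^{\cdot}_{j}$ only over $\mathbf{g}_{1},\dots,\mathbf{g}_{\xi}$, i.e.\ it tacitly restricts the columns of $A$ to $\mathfrak{N}(L)$. This matters because, by the paper's definition, $\{\beta_{j}\}_{1}^{\xi}$ is a reordering of the \emph{nonzero} eigenvalues $\{\lambda_{j}\}_{1}^{\xi}$ only. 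In your linear program the minimizing assignment puts $p_{k}=1$ on the $m$ smallest values of $\mathrm{Re}\lambda_{k}$ among all $n$ eigenvalues; since $\mathfrak{Re}L\geq 0$ (and in the setting of Section \ref{3.1.2} the $\lambda_{j}$ are in fact positive), the zero eigenvalues are among those smallest values whenever $\mathrm{N}(L)\neq\{0\}$ --- which is the typical situation for a graph Laplacian. Your argument then yields a minimum strictly below $\sum_{j=1}^{m}\mathrm{Re}\beta_{j}$ (indeed equal to $0$ if $\mu\geq m$), so as written it does not establish \eqref{9}: you must either confine the columns of $A$ to $\mathfrak{N}(L)$, as the paper silently does, or justify discarding the kernel directions. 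A related detail: your identity $E^{\ast}DE=e^{i\theta}I$ needs \eqref{8v} for pairs of kernel eigenvectors, i.e.\ for \emph{equal} eigenvalues, while the paper proves \eqref{8v} only when $\lambda_{s}\neq\lambda_{j}$; restricting to $\mathfrak{N}(L)$, where the eigenvalues are pairwise distinct by the standing ordering assumption, removes this difficulty as well.
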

\begin{proof}Consider the following reasonings
$$
        e^{-i\theta}\sum\limits_{j=1}^{m}( L \mathbf{a}^{\cdot}_{j}  ,     \mathbf{a}^{\cdot}_{j}   )_{\mathbb{C}^{n}}= e^{-i\theta}\sum\limits_{j=1}^{m}\left( \sum\limits_{s=1}^{\xi}c_{sj }\beta_{s}
          D\mathbf{g}_{s},\sum\limits_{s=1}^{\xi} c_{sj }\mathbf{g}_{s} \right)_{\!\!\!\mathbb{C}^{n}}=
            \sum\limits_{j=1}^{m}\sum\limits_{s=1}^{\xi}   |c_{sj }|^{2}\beta _{s}=
          \sum\limits_{s=1}^{\xi}\beta _{s} \sum\limits_{j=1}^{m}  |c_{sj }|^{2},
$$
where we used the following decomposition
$$
\mathbf{a}^{\cdot}_{j}= \sum\limits_{s=1}^{\xi} c_{sj }\mathbf{g}_{s}.
$$
Therefore, observing the real parts in  both sides of the equality, we get
$$
        \mathrm{Re}\sum\limits_{j=1}^{m}( L_{1} \mathbf{a}^{\cdot}_{j}  ,     \mathbf{a}^{\cdot}_{j}   )_{\mathbb{C}^{n}}  = \sum\limits_{s=1}^{\xi}\mathrm{Re}\beta _{s} \sum\limits_{j=1}^{m}  |c_{sj }|^{2}.
$$
Note that
\begin{equation}\label{10}
e^{i\theta}\cdot\delta_{jk}=(  D \mathbf{a}^{\cdot}_{j} ,\mathbf{a}^{\cdot}_{k}   )_{\mathbb{C}^{n}}=\left(\sum\limits_{s=1}^{\xi}c_{sj }D\mathbf{g}_{s},\sum\limits_{s=1}^{\xi} c_{sk }\mathbf{g}_{s} \right)_{\!\!\!\mathbb{C}^{n}}=
e^{i\theta}\sum\limits_{s=1}^{\xi} c _{sj }\overline{c} _{sk }\,;\;\;\sum\limits_{s=1}^{\xi} c _{sj }\overline{c }_{sk } =\delta_{jk}.
\end{equation}
Thus, the first  problem \eqref{9} can be rewritten in the form
$$
\min\limits_{\mathcal{C}^{\ast}\mathcal{C}=I} \sum\limits_{s=1}^{\xi}\mathrm{Re}\beta _{s} \sum\limits_{j=1}^{m}  |c_{sj }|^{2},\;\sum\limits_{j=1}^{m}\sum\limits_{s=1}^{\xi} |c _{sj }|^{2}=m,
$$
where
$
    \mathcal{C}:=\{c_{sj}\}\in \mathbb{C}^{\xi\times m}.
$
Denote
$$
x_{s}:=\sum\limits_{j=1}^{m}  |c_{sj }|^{2},
$$
then  the expression corresponding to the  main problem can be rewritten in the form
$$
f_{\xi}(x_{1},x_{2},...,x_{\xi}) :=\sum\limits_{s=1}^{\xi}x_{s}\mathrm{Re}\beta _{s}  ,\;\sum\limits_{s=1}^{\xi}x_{s}=m,
$$
where the function $f_{\xi}$ is the function of several real variables defined on $\mathbb{R}^{\xi}.$
Substituting, we get
\begin{equation}\label{11}
g_{\xi-1}(x_{1},x_{2},...,x_{\xi-1}):=\sum\limits_{s=1}^{\xi-1}x_{s}\mathrm{Re}\beta _{s}+\mathrm{Re}\beta _{\xi}\left(m-\sum\limits_{s=1}^{\xi-1}x_{s}\right)= \sum\limits_{s=1}^{\xi-1}x_{s}\mathrm{Re}\left(\beta _{s}-\beta _{\xi}\right)+m \mathrm{Re}\beta _{\xi},
\end{equation}
$$
\;\sum\limits_{s=1}^{\xi-1}x_{s}\leq m.
$$
Therefore
$$
\nabla g_{\xi-1}=\begin{pmatrix} c_{1\xi} \\
  c_{2\xi} \\
\vdots  \\
 c_{\xi-1\xi}
\end{pmatrix},\;c_{s\xi}:=\mathrm{Re}\left(\beta _{s}-\beta _{\xi}\right)<0.
$$
Combining  this fact with the characteristics of   the hyperplane, we get
$$
\min\limits_{\mathbf{x}\in \mathbb{R}^{\xi-1}_{m}}  g_{\xi-1}(x)= \min\limits_{\mathbf{x}\in \mathbb{T}^{\xi-1}_{m}}\sum\limits_{s=1}^{\xi-1}x_{s}\mathrm{Re}\beta _{s}=  \min\limits_{\mathbf{x}\in \mathbb{T}^{\xi-1}_{m}}  f_{\xi-1}(x),
$$
where
$$
\mathbb{R}^{n}_{m}:=\left\{\mathbf{x}\in \mathbb{R}^{n}:\, \sum\limits_{s=1}^{n}x_{s}\leq m \right\},\;\mathbb{T}^{n}_{m}:=\left\{\mathbf{x}\in \mathbb{R}^{n}:\, \sum\limits_{s=1}^{n}x_{s}= m \right\},\;n,m\in \mathbb{N}.
$$
Implementing the same reasonings, we get
\begin{equation}\label{12}
g_{\xi-2}(x_{1},x_{2},...,x_{\xi-2}):=\sum\limits_{s=1}^{\xi-2}x_{s}\mathrm{Re}\beta _{s}+\mathrm{Re}\beta _{\xi-1}\left(m-\sum\limits_{s=1}^{\xi-2}x_{s}\right)=
$$
$$
=\sum\limits_{s=1}^{\xi-2}x_{s}\mathrm{Re}\left(\beta _{s}-\beta _{\xi}\right)+m \mathrm{Re}\beta _{\xi-1},\;\sum\limits_{s=1}^{\xi-2}x_{s}\leq m.
\end{equation}
Therefore, the constant vector $\nabla g_{\xi-2}$ has negative coordinates. Analogously to the above, we get
$$
\min\limits_{\mathbf{x}\in \mathbb{R}^{\xi-2}_{m}}  g_{\xi-2}(x)=   \min\limits_{\mathbf{x}\in \mathbb{T}^{\xi-2}_{m}}  f_{\xi-2}(x).
$$
Proceeding the same reasonings, we come to the problem
$$
\min\limits_{\mathbf{x}\in \mathbb{T}^{m}_{m}}  f_{m}(x).
$$
On the other hand, we have
\begin{equation}\label{13}
   f_{m}(x)=  \sum\limits_{s=1}^{m}\mathrm{Re}\beta _{s} \sum\limits_{j=1}^{m}  |c_{sj }|^{2},\;
\sum\limits_{s=1}^{m} \overline{c }_{sk } c _{sj } =\delta_{jk},\,j,k=1,2...,m.
\end{equation}
Note that the last condition can be rewritten in the form
$$
\mathcal{C}^{\ast}\mathcal{C}=I,\; \mathcal{C} \in \mathbb{C}^{m\times m},\Rightarrow \mathcal{C}\mathcal{C}^{\ast}=I,
$$
i.e. $\mathcal{C}$ is the unitary matrix. Thus, in accordance with the  general property of the unitary matrix,  the latter equality can be rewritten in the form
$$
\sum\limits_{j=1}^{m} c _{sj  }\overline{c }_{kj } =\delta_{sk},\,s,k=1,2...,m.
$$
Substituting this relation into \eqref{13}, we obtain
$$
\min\limits_{A^{\ast}DA=e^{i\theta}I}\sum\limits_{j=1}^{m}\mathrm{Re}(L_{1}\mathbf{a}^{\cdot}_{j},\mathbf{a}^{\cdot}_{j})_{\mathbb{C}^{n}}=\min\limits_{\mathbf{x}\in \mathbb{T}^{m}_{m}}  f_{m}(x)=  \sum\limits_{s=1}^{m}\mathrm{Re}\beta _{s}.
$$
The proof  corresponding to the imaginary part is absolutely analogous.
\end{proof}

The following lemma represents   a technical tool  to construct  mappings having the required properties.
\begin{lem}\label{L2} Assume that the  matrix $W\in \mathbb{C}^{n\times n}$ is normal,   then for an arbitrary matrix $ Y\in \mathbb{C}^{ n\times m }$ satisfying the conditions
$$
 \{\mathbf{y}^{\cdot}_{j} \}_{1}^{m}\subset \mathfrak{N}_{m},\; Y^{\ast}Y=I,\,
$$
the following relation holds
$$
 \underset{ A^{\ast}D A=e^{i\theta} I }{ \mathrm{argmin}} \left|\mathrm{tr}\left\{A^{\ast}LA \right\}\right| =
  Y,\;A\in \mathbb{C}^{m\times n}.
$$
\end{lem}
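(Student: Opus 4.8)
The plan is to reduce the modular minimization to the two real minimizations already settled in Theorem~\ref{T2}. First I would pass from $L$ to the normal operator $L_{1}:=e^{-i\theta}L$. Since $e^{i\theta}$ is unimodular, $\mathrm{tr}\{A^{\ast}LA\}=e^{i\theta}\,\mathrm{tr}\{A^{\ast}L_{1}A\}$ and hence $\left|\mathrm{tr}\{A^{\ast}LA\}\right|=\left|\mathrm{tr}\{A^{\ast}L_{1}A\}\right|$, so both the constraint $A^{\ast}DA=e^{i\theta}I$ and the objective may be analyzed through $L_{1}$. Writing $\mathrm{tr}\{A^{\ast}L_{1}A\}=U(A)+iV(A)$ with $U(A)=\sum_{j=1}^{m}\mathrm{Re}(L_{1}\mathbf{a}^{\cdot}_{j},\mathbf{a}^{\cdot}_{j})_{\mathbb{C}^{n}}$ and $V(A)=\sum_{j=1}^{m}\mathrm{Im}(L_{1}\mathbf{a}^{\cdot}_{j},\mathbf{a}^{\cdot}_{j})_{\mathbb{C}^{n}}$, the objective becomes $\left|\mathrm{tr}\{A^{\ast}LA\}\right|=\sqrt{U(A)^{2}+V(A)^{2}}$.

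Next I would exploit normality to diagonalize simultaneously. Because $W$ is normal, so are $L$ and $D^{-1}L$, and the generalized eigenvectors $\{\mathbf{g}_{s}\}_{1}^{\xi}$ form a $D$-orthonormal system $(D\mathbf{g}_{s},\mathbf{g}_{s'})_{\mathbb{C}^{n}}=e^{i\theta}\delta_{ss'}$, as established in \eqref{8v}. Expanding the columns of a feasible $A$ as $\mathbf{a}^{\cdot}_{j}=\sum_{s=1}^{\xi}c_{sj}\mathbf{g}_{s}$, relation \eqref{10} shows that the coefficient matrix has orthonormal columns, so the weights $t_{s}:=\sum_{j=1}^{m}|c_{sj}|^{2}$ satisfy $t_{s}\in[0,1]$ and $\sum_{s=1}^{\xi}t_{s}=m$ (the diagonal of a rank-$m$ orthogonal projection); moreover $\mathrm{tr}\{A^{\ast}L_{1}A\}=\sum_{s=1}^{\xi}\beta_{s}t_{s}$. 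Thus the feasible objective values are exactly $\bigl\{\,\bigl|\sum_{s}\beta_{s}t_{s}\bigr|:t_{s}\in[0,1],\ \sum_{s}t_{s}=m\,\bigr\}$.

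I would then read off the lower bound from Theorem~\ref{T2}. Taking real and imaginary parts separately yields $U(A)\ge\sum_{j=1}^{m}\mathrm{Re}\beta_{j}=:U_{\ast}$ and $V(A)\ge\sum_{j=1}^{m}\mathrm{Im}\gamma_{j}=:V_{\ast}$ over the feasible set. Since $\mathfrak{Re}L_{1}=\cos\theta\,\mathfrak{Re}L+\sin\theta\,\mathfrak{Im}L\ge0$ for $\theta\in(0,\pi/2)$, one has $\mathrm{Re}\beta_{s}=(\mathfrak{Re}L_{1}\mathbf{g}_{s},\mathbf{g}_{s})_{\mathbb{C}^{n}}\ge0$, so $U_{\ast}\ge0$, and consequently $\left|\mathrm{tr}\{A^{\ast}LA\}\right|=\sqrt{U(A)^{2}+V(A)^{2}}\ge\sqrt{U_{\ast}^{2}+V_{\ast}^{2}}$, \emph{provided} the two minima are attained by a single feasible $A$. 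The crux is to produce such a common minimizer: because $L_{1}$ is normal, $\mathfrak{Re}L_{1}$ and $\mathfrak{Im}L_{1}$ commute and share the eigensystem $\{\mathbf{g}_{s}\}$, so each of $U,V$ is a weighted sum against the \emph{same} weights $t_{s}$; the weight vector concentrated on the generators of $\mathfrak{N}_{m}$, namely $t_{s}=1$ for $s\le m$ and $t_{s}=0$ otherwise, is then the candidate realizing both minima at once, whence the modulus attains $\sqrt{U_{\ast}^{2}+V_{\ast}^{2}}$ precisely when the columns of $A$ span $\mathfrak{N}_{m}$.

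Finally I would identify the argmin. Equality in the bound forces $t_{s}=0$ for $s>m$, hence $\mathrm{R}(A)\subseteq\mathfrak{N}_{m}$; combined with $\dim\mathfrak{N}_{m}=m$ and the feasibility constraint, the columns of $A$ form a basis of $\mathfrak{N}_{m}$ with the prescribed normalization, the only residual freedom being a unitary mixing of the columns encoded by $Y^{\ast}Y=I$. Thus every minimizer is of the stated form and, conversely, every $Y$ with $\{\mathbf{y}^{\cdot}_{j}\}_{1}^{m}\subset\mathfrak{N}_{m}$ and $Y^{\ast}Y=I$ attains the minimum, giving $\mathrm{argmin}=Y$. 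I expect the main obstacle to be the third step: one must verify that the real-part and imaginary-part minima of Theorem~\ref{T2} are attained \emph{simultaneously} and that the modulus ordering $|\lambda_{1}|<\dots<|\lambda_{\xi}|$ defining $\mathfrak{N}_{m}$ is consistent with both the real-part ordering of the $\beta_{s}$ and the imaginary-part ordering of the $\gamma_{s}$. This reconciliation of the three orderings is exactly where the normality of $W$ (a common eigensystem for $\mathfrak{Re}L_{1}$ and $\mathfrak{Im}L_{1}$) and the sign information $\mathfrak{Re}L_{1}\ge0$ are indispensable.
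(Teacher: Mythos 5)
Your proposal follows essentially the same route as the paper's proof: split $\left|\mathrm{tr}\{A^{\ast}LA\}\right|^{2}$ into the squares of $\sum_{j}\mathrm{Re}(L_{1}\mathbf{a}^{\cdot}_{j},\mathbf{a}^{\cdot}_{j})_{\mathbb{C}^{n}}$ and $\sum_{j}\mathrm{Im}(L_{1}\mathbf{a}^{\cdot}_{j},\mathbf{a}^{\cdot}_{j})_{\mathbb{C}^{n}}$, bound each term from below by Theorem \ref{T2}, and then verify that any $Y$ with columns in $\mathfrak{N}_{m}$ and $Y^{\ast}Y=I$ attains the bound by trace invariance. In two places you are actually more careful than the paper: you justify squaring the inequalities by noting $\mathfrak{Re}L_{1}=\cos\theta\,\mathfrak{Re}L+\sin\theta\,\mathfrak{Im}L\geq 0$ for $\theta\in(0,\pi/2)$, and you identify the feasible weights $t_{s}$ as the diagonal of a rank-$m$ orthogonal projection, which pins down the feasible set exactly.

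The obstacle you flag at the end --- that the real-part and imaginary-part minima of Theorem \ref{T2} must be attained by one and the same feasible $A$, i.e.\ that the modulus ordering of the $\lambda_{j}$ defining $\mathfrak{N}_{m}$ is compatible with the separate orderings of the $\mathrm{Re}\,\beta_{j}$ and the $\mathrm{Im}\,\gamma_{j}$ --- is precisely the step the paper does not prove either: it is dispatched in a single sentence ("one-to-one correspondence between the eigenvalues and their real or imaginary parts numerated in order of their values increasing"), which is an assertion, not an argument. Normality by itself (a common eigensystem for $\mathfrak{Re}L_{1}$ and $\mathfrak{Im}L_{1}$) does not force the three orderings to agree: nothing in the hypotheses of Lemma \ref{L2} excludes a pair of generalized eigenvalues in which the one of smaller modulus has the larger real part; in such a configuration the vertex $t_{s}=1$ for $s\leq m$, $t_{s}=0$ otherwise is not a common minimizer, and the minimum of $\bigl|\sum_{s}\lambda_{s}t_{s}\bigr|$ can even be attained at an interior point of the simplex, so that $Y$ would fail to be an argmin. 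Hence your candidate common minimizer remains a candidate, exactly as in the paper. What closes the gap in the setting where the lemma is actually applied (paragraph \ref{3.1.2}) is the special form \eqref{14l}: there $W=(\alpha+i\beta)H$, so $L=(\alpha+i\beta)(D_{H}-H)$ and all generalized eigenvalues satisfy $\arg\lambda_{j}=0$; with every $\lambda_{j}$ real and nonnegative, the modulus, real-part and imaginary-part orderings trivially coincide and your third step goes through. In the stated generality of the lemma, your proposal and the paper's proof share the same unproven step, and you deserve credit for naming it explicitly rather than asserting it away.
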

\begin{proof}
Applying Lemma \ref{L1}, Theorem \ref{T2}, we obtain
$$
\underset{ A^{\ast}D A=e^{i\theta}I }{ \mathrm{min}} \mathrm{tr}\left\{A^{\ast}LA \right\}=  \min\limits_{A^{\ast}DA=e^{i\theta}I} \left|\sum\limits_{j=1}^{m} \mathrm{Re} (L_{1}\mathbf{a}^{\cdot}_{j},\mathbf{a}^{\cdot}_{j})_{\mathbb{C}^{n}}\right|^{2}  =
$$
$$
=\min\limits_{A^{\ast}DA=e^{i\theta}I}\left\{\left(\sum\limits_{j=1}^{m} \mathrm{Re} (L_{1}\mathbf{a}^{\cdot}_{j},\mathbf{a}^{\cdot}_{j})_{\mathbb{C}^{n}}\right)^{2} +\left(\sum\limits_{j=1}^{m}\mathrm{Im} (L_{1}\mathbf{a}^{\cdot}_{j},\mathbf{a}^{\cdot}_{j})_{\mathbb{C}^{n}}\right)^{2}\right\}\geq
$$
$$
\geq \min\limits_{A^{\ast}DA=e^{i\theta}I} \left(\sum\limits_{j=1}^{m} \mathrm{Re} (L_{1}\mathbf{a}^{\cdot}_{j},\mathbf{a}^{\cdot}_{j})_{\mathbb{C}^{n}}\right)^{2} +\min\limits_{A^{\ast}DA=e^{i\theta}I} \left(\sum\limits_{j=1}^{m}\mathrm{Im} (L_{1}\mathbf{a}^{\cdot}_{j},\mathbf{a}^{\cdot}_{j})_{\mathbb{C}^{n}}\right)^{2} =
$$
$$
=   \left(\sum\limits_{j=1}^{m}\mathrm{Re}\beta_{j}\right)^{2}+ \left(\sum\limits_{j=1}^{m}\mathrm{Im}\gamma_{j}\right)^{2}= \left(\sum\limits_{j=1}^{m}\mathrm{Re}\lambda_{j}\right)^{2}+ \left(\sum\limits_{j=1}^{m}\mathrm{Im}\lambda_{j}\right)^{2}.
$$
The latter relation holds due to one-to-one correspondence between the  eigenvalues and  their  real or imaginary parts numerated in order  of their values increasing.
 In accordance with the well-known  theorem on the  trace of the finite-dimensional operator, we get
$$
 \left|\sum\limits_{j=1}^{m}   (L_{1}\mathbf{y}^{\cdot}_{j},\mathbf{y}^{\cdot}_{j})_{\mathbb{C}^{n}}\right|^{2} =\left(\sum\limits_{j=1}^{m} \mathrm{Re} (L_{1}\mathbf{y}^{\cdot}_{j},\mathbf{y}^{\cdot}_{j})_{\mathbb{C}^{n}}\right)^{2} +\left(\sum\limits_{j=1}^{m}\mathrm{Im} (L_{1}\mathbf{y}^{\cdot}_{j},\mathbf{y}^{\cdot}_{j})_{\mathbb{C}^{n}}\right)^{2} =
 $$
 $$
\left(\sum\limits_{j=1}^{m} \mathrm{Re} (L_{1}\mathbf{e}_{j},\mathbf{e}_{j})_{\mathbb{C}^{n}}\right)^{2} +\left(\sum\limits_{j=1}^{m}\mathrm{Im} (L_{1}\mathbf{e}_{j},\mathbf{e}_{j})_{\mathbb{C}^{n}}\right)^{2} =\left(\sum\limits_{j=1}^{m}\mathrm{Re}\lambda_{j}\right)^{2}+ \left(\sum\limits_{j=1}^{m}\mathrm{Im}\lambda_{j}\right)^{2}.
$$
 The latter relation leads to  the desired result.
\end{proof}

\subsubsection{Restriction of the dimension} \label{3.1.2}

Consider the sets of elements
\begin{equation}\label{13a}
\mathbf{u}_{1},\mathbf{u}_{2},...,\mathbf{u}_{n}\in \mathbb{R}^{l},\;\;\mathbf{v}_{1},\mathbf{v}_{2},...,\mathbf{v}_{n}\in \mathbb{R}^{p},\;\;l,p\in \mathbb{N},
\end{equation}
and assume that the graph $G$ is constructed in accordance with the one of the methods described in the introduction section, i.e. $\varepsilon$ - neighborhood or $n$ - nearest neighbors methods. Thus, we can put    weighted matrices  $W^{(1)},\,W^{(2)}$ into correspondence with the sets in accordance with the given order, where
\begin{equation}\label{14x}
\;W^{(1)}_{sj}=\left\{ \begin{aligned}
  e^{- t^{-1}\|\mathbf{u}_{s}-\mathbf{u}_{j}\|^{2}_{\mathbb{E}^{l}} },\;s\sim j \\
 0,\;s\nsim j   \\
\end{aligned}
 \right.  ,\;W^{(2)}_{sj}=\left\{ \begin{aligned}
 e^{- t^{-1}\|\mathbf{v}_{s}-\mathbf{v}_{j}\|^{2}_{\mathbb{E}^{p}} },\;s\sim j \\
 0,\;s\nsim j   \\
\end{aligned}
 \right..
\end{equation}
  The matrix following matrix  is called by the matrix of the coupled mapping
\begin{equation}\label{14l}
W=\alpha H+i \beta H,\;\alpha,\beta>0,\;\alpha+\beta=1,
$$
where
$$
H_{sj} = \eta W^{(1)}_{sj}+\mu W^{(2)}_{sj} ,\;\eta,\mu>0,\;\eta+\mu=1.
\end{equation}
 It is clear that the operator $H$ is normal the values of the parameters $\alpha,\beta,\eta,\mu$  reflect the influence  of the corresponding components on  the image. Let us construct the corresponding generalized  graph Laplacian operator $L=D-W.$   It is not hard to prove that \eqref{8v} holds, where
$
 \theta=\arctan \beta/\alpha,
$
 we  should just  notice that in accordance with \eqref{14l}, we have
$$
\theta=\arg(D\mathbf{z},\mathbf{z})_{\mathbb{C}^{n}}= \arctan \left\{\frac{\mathrm{Im} (D\mathbf{z},\mathbf{z})_{\mathbb{C}^{n}}}{\mathrm{Re}(D\mathbf{z},\mathbf{z})_{\mathbb{C}^{n}}}\right\}=\arctan \frac{\beta}{\alpha},\;\mathbf{z}\in \mathbb{C}^{n}.
$$
Analogously, we get
$$
\arg(L\mathbf{z},\mathbf{z})_{\mathbb{C}^{n}} =\arctan \frac{\beta}{\alpha},\;\mathbf{z}\in \mathbb{C}^{n}.
$$
Note that the Hermitian components of the operator $L$ are diagonally dominant matrices, therefore  they are positive semidefinite matrices, we have
$$
\mathrm{Re}(L\mathbf{e}_{j},\mathbf{e}_{j})_{\mathbb{C}^{n}}>0,\;\mathrm{Im}(L\mathbf{e}_{j},\mathbf{e}_{j})_{\mathbb{C}^{n}}>0,\;j=1,2,...,\xi.
$$
Using the relation
$
(L\mathbf{e}_{j},\mathbf{e}_{j})_{\mathbb{C}^{n}}=\lambda_{j} e^{i\theta},\,j=1,2,...,\xi,
$
we can obtain easily $\arg(L\mathbf{z},\mathbf{z})_{\mathbb{C}^{n}}=\theta,$
therefore $\arg \lambda_{j}=0,\,j=1,2,...,\xi.$ Without loss of generality, assume that $l\geq p,$ then we can put the subset of $\mathbb{C}^{l}$ into correspondence with  the subsets \eqref{13a} in the following way
$
\mathbf{x}_{j}=\mathbf{u}_{j}+i \mathbf{v}_{j},\,j=1,2,...,n,
$
where we assume that the element $\mathbf{v}_{j}$ has the zero coordinates with the indexes more than the value  $p<l.$

Thus,  if we find the mapping from $\mathbb{C}^{l}$ into $\mathbb{C}^{m},\;1\leq m \leq p,$ i.e.
$
F :\mathbb{C}^{l} \rightarrow \mathbb{C}^{m},
$
then due to the correspondence between the real and imaginary parts of the complex numbers   we    obtain   naturally    coupled mappings
$$
F_{1}:\mathbb{R}^{l} \rightarrow \mathbb{R}^{m},\;\;F_{2}:\mathbb{R}^{p} \rightarrow \mathbb{R}^{m}.
$$

Assume that the matrix $W$ satisfies conditions \eqref{14l}, consider the following problem
\begin{equation}\label{14a}
\underset{ A^{\ast}D A=e^{i\theta} I}{ \mathrm{argmin}}   \left|\sum\limits_{s,j=1}^{n}\|\mathbf{a}_{s}-\mathbf{a}_{j}\|_{\mathbb{C}^{m}}^{2}W_{sj}\right|,\;A\in \mathbb{C}^{m\times n}.
\end{equation}
Eventually, we can  resume  the following theorem.
\begin{teo}\label{T3}  The solution of  problem \eqref{14a} is represented by a set $\{\mathbf{y}_{j} \}_{1}^{n}\subset \mathbb{C}^{m}$ satisfying the following conditions
$$
 \{\mathbf{y}^{\cdot}_{j} \}_{1}^{m}\subset \mathfrak{N}_{m},\; Y^{\ast}Y=I.
$$
Moreover the following coupled mapping is defined
$$
\mathbf{u}_{j}\rightarrow  \mathrm{Re}\,\mathbf{y}_{j} ,\;\mathbf{v}_{s}\rightarrow \mathrm{Im}\,\mathbf{y}_{j},\;j=1,2,...,n.
$$
\end{teo}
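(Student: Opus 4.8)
The plan is to recognize problem \eqref{14a} as precisely the minimization problem already resolved in Lemma \ref{L2}, once its objective is translated through Lemma \ref{L1}. First I would verify that the hypotheses of the preceding results are in force. By the construction \eqref{14l} we have $W=(\alpha+i\beta)H$, where $H=\eta W^{(1)}+\mu W^{(2)}$ is a real symmetric matrix, being a nonnegative combination of the two real symmetric weight matrices built from the real data sets \eqref{13a}; thus $H$ is selfadjoint and in particular normal, and multiplication by the scalar $\alpha+i\beta$ preserves normality, so $W$ is normal. This is exactly the standing assumption of Theorem \ref{T2} and Lemma \ref{L2}, so both become available, together with the phase computations of this subsection that give $\theta=\arctan(\beta/\alpha)$ and $\arg\lambda_j=0$.

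Next I would rewrite the functional appearing in \eqref{14a}. Applying Lemma \ref{L1} to the matrix $A$ whose rows are $\mathbf{a}_s\in\mathbb{C}^m$ and whose columns are $\mathbf{a}^{\cdot}_j\in\mathbb{C}^n$, one obtains
$$\sum_{s,j=1}^{n}\|\mathbf{a}_s-\mathbf{a}_j\|^2_{\mathbb{C}^m}W_{sj}=2\,\mathrm{tr}\{A^{\ast}LA\}.$$
Consequently the objective of \eqref{14a} equals $2\,\left|\mathrm{tr}\{A^{\ast}LA\}\right|$, and since multiplication by the fixed positive constant $2$ does not alter the location of the minimum, the argmin in \eqref{14a} coincides with $\underset{A^{\ast}DA=e^{i\theta}I}{\mathrm{argmin}}\,\left|\mathrm{tr}\{A^{\ast}LA\}\right|$.

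At this stage the description of $\{\mathbf{y}_j\}$ is immediate from Lemma \ref{L2}: the argmin is attained exactly at those matrices $Y$ whose columns $\{\mathbf{y}^{\cdot}_j\}_1^m$ lie in the subspace $\mathfrak{N}_m$ generated by the first $m$ generalized eigenvectors and satisfy $Y^{\ast}Y=I$, which is precisely the asserted characterization of the solution, the rows $\mathbf{y}_1,\dots,\mathbf{y}_n\in\mathbb{C}^m$ furnishing the images. For the coupled-mapping claim I would invoke the complexification set up earlier: since $l\geq p$, the pair \eqref{13a} is encoded as $\mathbf{x}_j=\mathbf{u}_j+i\mathbf{v}_j$, and the mapping $F:\mathbb{C}^l\to\mathbb{C}^m$ produced by the minimizer sends $\mathbf{x}_j\mapsto\mathbf{y}_j$. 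Splitting the image into real and imaginary parts, $\mathbf{y}_j=\mathrm{Re}\,\mathbf{y}_j+i\,\mathrm{Im}\,\mathbf{y}_j$, and using the correspondence between the real/imaginary parts of preimage and image yields the two naturally coupled real mappings $\mathbf{u}_j\mapsto\mathrm{Re}\,\mathbf{y}_j$ and $\mathbf{v}_j\mapsto\mathrm{Im}\,\mathbf{y}_j$.

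The argument is in essence a synthesis, the analytic substance residing in Lemmas \ref{L1}, \ref{L2} and Theorem \ref{T2}, all already established. I expect the only point demanding genuine care to be the bookkeeping that confirms the constraint $A^{\ast}DA=e^{i\theta}I$ and the normalization $\arg\lambda_j=0$ forced by \eqref{14l} are mutually consistent, that is, that the phase $\theta=\arctan(\beta/\alpha)$ dictated by the coupled weight matrix is the very same $\theta$ entering the constraint and the definition of the generalized eigenvectors in \eqref{8v}, so that Lemma \ref{L2} applies verbatim with no residual change of phase.
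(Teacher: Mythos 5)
Your proposal is correct and follows essentially the same route as the paper, whose entire proof is the one-line remark that the result follows immediately from Lemma \ref{L1} and Lemma \ref{L2}: you translate the objective of \eqref{14a} into $2\left|\mathrm{tr}\{A^{\ast}LA\}\right|$ via Lemma \ref{L1} and then invoke the argmin characterization of Lemma \ref{L2}. The additional bookkeeping you supply --- normality of $W$ from \eqref{14l}, the consistency of the phase $\theta=\arctan(\beta/\alpha)$ with the constraint and with \eqref{8v}, and the splitting into real and imaginary parts for the coupled mapping --- is exactly what the paper leaves implicit, so your write-up is a more complete version of the same argument.
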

\begin{proof} The proof follows immediately from Lemma1, Lemma 2.
\end{proof}

\subsubsection{Probabilistic approach}

It is remarkable that   Theorem \ref{T3} creates a prerequisite for further optimization problem. In this regard, we can observe the  Kullback-Leibler divergence as the most suitable tool practically  showing  the preservation between the elements characteristics. Practically,  generalized graph Laplacian method  projects the distinct unmatched features across single-cell
multi-omics data sets into a common embedding space. The aim is to preserve
the intrinsic low-dimensional structures as well as the aligned cells
simultaneously. The next step is  to apply  an analog of  t-distribution stochastic
neighbor embedding  method \cite{van der Maaten2008}. Stochastic Neighbor Embedding (SNE) \cite{Hint2002} starts by converting the high-dimensional Euclidean distances
between data elements  into conditional probabilities that represent similarities. The similarity
between the data element  $\mathbf{x}_{j}$ and the  data element  $\mathbf{x}_{s}$ is the conditional probability, $p_{j|s},$   that $ \mathbf{x} _{s}$ would correspond to $ \mathbf{x} _{j}$ as its neighbor
if neighbors are corresponded to each other  in proportion to their probability density under a Gaussian function centered at
$  \mathbf{x}  _{s}.$ For nearby data elements, $p_{j|s}$ is relatively high, at the same time  for widely separated data elements, $p_{j|s}$ is
  almost infinitesimal  for reasonable values of the variance of the Gaussian function $\sigma_{s}.$  Theoretically,
the conditional probability $p_{j|s}$ is defined as follows
$$
p_{ j|s}:=
\frac{e^{-\| \mathbf{x}_{s}-\mathbf{x}_{j} \|/2\sigma_{s}^{2}}}{\sum\limits_{m\neq s}e^{-\| \mathbf{x}_{s}-\mathbf{x}_{m} \|/2\sigma_{s}^{2}}},
$$
where $\sigma_{s}$ is the variance of the Gaussian function  \cite{van der Maaten2008}  centered on the data element  $\mathbf{x}_{s}.$ Since we are only interested in modeling pairwise
similarities, we set the value of $p_{s|s}$ to zero. For the low-dimensional counterparts $\mathbf{y}_{s}$ and $\mathbf{y}_{j}$ of the
high-dimensional data elements $\mathbf{x}_{s}$ and $\mathbf{x}_{j},$   it is easy  to compute a similar conditional probability,
which we denote by $q_{j|s}.$ We set  the variance of the Gaussian function  that is employed in the calculation
of the conditional probabilities $q_{j|s}$ to the value  $2^{-1/2}.$ Therefore, we model the similarity of map element  $\mathrm{y}_{j}$ to map
element  $y_{s}$ by
$$
q_{ j|s}:=
\frac{e^{-\| \mathbf{y}_{s}-\mathbf{y}_{j} \| }}{\sum\limits_{m\neq s}e^{-\| \mathbf{y}_{s}-\mathbf{y}_{m} \| }}.
$$
Analogously to the previous case we set $q_{s|s}=0.$  In accordance with the description given in  \cite{van der Maaten2008}, we conclude that  if the map elements $\mathbf{y}_{s}$ and $\mathbf{y}_{j}$ correctly model the similarity between the high-dimensional data elements
$\mathbf{x}_{s}$ and $\mathbf{x}_{j},$  the conditional probabilities $p_{ j|s}$ and $q_{ j|s}$ will be equal. Having taken into account  this description,
the SNE method  aims to find a low-dimensional data representation that minimizes the divergence  between $p_{ j|s}$
and $q_{ j|s}.$ A natural measure of the faithfulness with which $q_{ j|s}$ models $p_{ j|s}$ is the Kullback-Leibler
divergence. The  SNE method  minimizes
the sum of Kullback-Leibler divergences over all data elements  using a gradient descent method.
The objective function   is represented by the following expression
$$
\sum\limits_{s=1}^{n}\mathrm{KL}(P_{s} ||Q_{s} )=\sum\limits_{s,j=1}^{n}p_{sj}\log \frac{p_{sj}}{q_{sj}},
$$
where  $P_{s}$ represents the conditional probability distribution corresponding to the data element $\mathbf{x}_{s}$ over all other data elements.
 The symbol  $Q_{s}$ represents the conditional probability distribution corresponding to the map element  $\mathbf{y}_{s}$  over all other map elements.
 Since  the Kullback-Leibler divergence includes the logarithmic function, various  types effects can appear. For instance, there
is a large cost for using widely separated map points to represent nearby data elements, i.e.  for using
a small $q_{ j|s}$ to model a large $p_{ j|s},$   but there is only a small cost for using nearby map elements to
represent widely separated data elements. This observation leads to the conclusion that   the  cost function focuses on preserving  the
local structure of the data in the map.In accordance with \cite{van der Maaten2008}, in most cases  there does not exist a  single value of  $\sigma_{s}$ that is appropriate  for all data elements  in the data set since  the density of the data is variable. The method represented in \cite{van der Maaten2008} produces a scheme of reasonings allowing to choose a  suitable value of the parameter $\sigma_{s}.$

The fact that the solution of the problem \eqref{14a} is not unique allows us to extend the theoretical investigation and apply probabilistic approach represented above. The idea is to choose
the concrete solution form the set of solutions given by orthonormal sets belonging to the eigenvector subspace having the corresponding dimension. This solution should satisfy the penalty term given by Kullback-Leibler divergence.
Here, we use the unified form of notation in the following writing
$$
X=\left(\mathbf{x}_{1},\mathbf{x}_{2},...,\mathbf{x}_{n}\right)^{T},\;Y=\left(\mathbf{y}_{1},\mathbf{y}_{2},...,\mathbf{y}_{n}\right)^{T}.
$$
Then, in accordance with   Theorem \ref{T3}, we have
$$
   X_{1}\rightarrow   Y_{1},\;  X_{2}\rightarrow   Y_{2}.
$$
where
$$
 X_{1}:=\mathrm{Re}  X,\;Y_{1}:=\mathrm{Re} Y,\;X_{2}:=\mathrm{Im} X,\;Y_{2}:=\mathrm{Im} Y.
$$
Consider the constructions related to the   symmetric SNE  \cite{van der Maaten2008}, the pairwise similarities are given by
$$
\mathrm{P}^{X_{1}}:=\{p_{ sj}\},\;p_{sj}:=
\frac{e^{-\|\mathrm{Re}(\mathbf{x}_{s}-\mathbf{x}_{j})\|/2\sigma ^{2}}}{\sum\limits_{m\neq s}e^{-\|\mathrm{Re}(\mathbf{x}_{s}-\mathbf{x}_{m})\|/2\sigma ^{2}}},\;
 \mathrm{Q}^{Y_{1}}:=\{q_{sj}\},\;q_{sj}:=
\frac{e^{-\|\mathrm{Re}(\mathbf{y}_{s}-\mathbf{y}_{j})\|}}{\sum\limits_{m\neq s}e^{-\|\mathrm{Re}(\mathbf{y}_{s}-\mathbf{y}_{m})\|}}.
$$
The values $p_{ss}=q_{ss}=0$ are determined by the meaning of these expressions, which sense is
in the correlation of the element  with itself. The indefinite value of the function for
elements with the same indexes is
defined as follows
$$
p_{ss}\log \frac{p_{ss}}{q_{ss}}=0,\;s=1,2,...,n.
$$
Define Kullback-Leibler
divergence applicably to the given distributions considered as arguments
$$
\mathrm{KL}(\mathrm{P}^{X_{1}} ||Q^{ Y_{1}} )=\sum\limits_{s,j=1}^{n}p_{sj}\log \frac{p_{sj}}{q_{sj}}.
$$
Consider the following optimization problem
 $$
\min\limits_{  Y_{1},  Y_{2}} \mathcal{F}(Y_{1},  Y_{2}),\;
$$
where the functional is defined as follows
$$
\mathcal{F}( Y_{1}, Y_{2})=\mathrm{KL}(\mathrm{P}^{X_{1}} ||Q^{ Y_{1}} )+\mathrm{KL}(\mathrm{P}^{X_{2}} ||Q^{Y_{2}} )+\zeta\|Y_{1}-Y_{2}\|_{F}^{2},\,\zeta\geq 0.
$$
The penalty term reflects the distance  between images, however it can be regulated by the parameters $\alpha,\beta$ given in the paragraph \ref{3.1.2}.  Moreover, the idea of the problem corresponding to the value   $\zeta=0$ represents the interest itself  due to the exclusive natural  structure of the complex plane.

\subsubsection{Characteristic of the mapping}

Consider an abstract  approach to a function creating a weight matrix, assume that
$$
W_{sj}=\psi(\mathbf{x}_{s},\mathbf{x}_{j}),\;s,\!j=1,2,..., n\,,
$$
where $\psi$ is a complex valued function. Thus, having imposed on the function $\psi$ corresponding conditions formulated in the previous paragraphs, we can create a mapping satisfying the required properties. It is remarkable that by virtue of the function $\psi$ we can give the sense, dictated by relations between the elements $\mathbf{x}_{s}$ and $\mathbf{x}_{j},$ to the velues $W_{sj},$ i.e.  the value $W_{sj}$ reflects the degree of relationship between  the elements  $\mathbf{x}_{s}$ and $\mathbf{x}_{j}.$ Apparently,  Theorem \ref{T1} provides an indicator of the function $\psi$ in some sense. Assume that the conditions of Theorem \ref{T1}  holds, applying Lemma \ref{L1}  observe the following relation
$$
  \mathrm{tr}\{ A ^{ \ast } L  A \}=\frac{1}{2} \sum\limits_{s,j=1}^{n}\|\mathbf{a}_{s}-\mathbf{a}_{j}\|_{\mathbb{C}^{n}}^{2}W_{sj}=\sum\limits_{j=1}^{n}       ( L\mathbf{a}^{\cdot}  _{ j},       \mathbf{a}^{\cdot}  _{ j })_{\mathbb{C}^{n}},\;A^{\ast}DA= e^{i\theta}.
$$
On the other hand, in accordance with Theorem \ref{T1},  we have
$$
 \mathrm{tr}\{ A ^{ \ast } L  A \}=2e^{i\theta}\mathrm{tr}\left\{D^{-1}L\right\}=2e^{i\theta}\sum\limits_{j=1}^{n}\left(1-\frac{W_{jj}}{D_{jj}}\right),
$$
here we should notice the obvious fact   $|W_{jj}/ D_{jj}|\leq 1.$ Therefore
$$
 \sum\limits_{s,j=1}^{n}\|\mathbf{a}_{s}-\mathbf{a}_{j}\|_{\mathbb{C}^{n}}^{2}W_{sj}=4 e^{i\theta}\sum\limits_{j=1}^{n}\left(1-\frac{W_{jj}}{D_{jj}}\right),\;A^{\ast}DA= e^{i\theta}.
$$
Motivated by this phenomena, we involve the functional $f(\psi)$ as the indicator of   the function $\psi$ influence on   the constructed mapping, where
$$
f(\psi):= e^{i\theta}\sum\limits_{j=1}^{n}\left(1-\frac{W_{jj}}{D_{jj}}\right)  .
$$
 The simple  heuristic reasonings leads us to the conclusion that    the smaller values $W_{sj},\,s\neq j$ indicating a relationship between the elements the more efficient the corresponding mapping from the point of view  of the elements closeness. It is rather clear that Theorem \ref{T1} can be used to reveal fully the properties of a function   relevant within the context. In order to observe  the particular case given by the weight matrix corresponding to distributions, assume that $\psi(\mathbf{x}_{s},\mathbf{x}_{j})=p_{sj},\;s,\!j=1,2,..., n,$ then it is clear that $D_{ss}=1,\,W_{ss}=0,$
therefore
$
f(\psi)=n e^{i\theta}.
$
The latter relation reflects  some intrinsic sense appealing to the connection of the distribution like weight matrix and the dimension of the complex Euclidian space. However, this rather simple  observation   reflects fully the dependence of the mapping on the function $\psi$ since the last equality can be used as the standard.

\subsection{Unsupervised manifold alignment via the reproducing kernel}

Nowadays, many methods of sequencing individual cells are available, but difficulties arise when applying several different sequencing methods to the same cell. In the paper \cite{Jie Liu 2019}, the authors represent unsupervised manifold alignment algorithm  MMD-MA for integrating multiple measurements  of observed data performed on various aliquots of a given cell population.

  The idea of the   method is to map the cells measurement data     produced by various methods  into a single space (latent space).

 According to the   algorithm, the data of a single cell corresponding to several types of measurements are aligned by optimizing
an objective function consisting of three components:  1) maximum average discrepancy (MMD),
which sets constrictions on the mapping  into the latent space, the "distances" between images should be minimal in the sense of some function determined by MMD,
 2) distortion term, a construction that preserves the structure of the initial data under mapping the initial space into the latent space, and  3) a penalty function that eliminates a trivial solution. It should be noted that the MMD-MA method does not require
any information on  the correspondence between the cell data processing methods. In addition, the requirements for the  considered type of mapping are rather weak.
  They allow the algorithm to integrate data from measurements made in a single cell of heterogeneous features, such as gene expression, DNA availability, chromatin structure, methylation, and visualization data. In the paper  \cite{Jie Liu 2019} the authors
demonstrate the relevance of the MMD-MA method in simulated experiments and using a set of real data, including
data on gene expression in a single cell and data on methylation.

In this paragraph, we study the construction of the  mapping based on the concept of the reproducing kernel. In order to implement the classical approach, when studying the method, the additional information from the theory of Hilbert spaces is provided. The issues related to the concept of the reproducing kernel  in the Hilbert space are considered. A theoretical justification of the opportunity to construct a suitable   mapping is given. Next, we study the objective function consisting of three terms considered  in the paper \cite{Jie Liu 2019}. In the conclusion, some ideas on  the possibility of developing the mathematical concept of the method are given.

\subsubsection{Reproducing kernel Hilbert space}

Let us define a complex linear space of elements of an arbitrary nature as a set of elements with a given structure of a linear operation mapping into the initial set, i.e. the linear combinations of  the elements   belong to the initial set.
A linear space with a given inner (scalar) product operation is called by  a unitary space.  Thus, the inner product naturally  defines the metric on the   elements of a linear space. However, it may happen that some  limits of sequences do not belong to the initial set of the linear space. Recall that a unitary space containing all its limiting elements is called by the Hilbert space. There are some discrepancies in the definitions of the Hilbert space. In some sources there is an additional condition related to the infinite dimension, i.e. an infinite-dimensional unitary space  is called by the Hilbert space.
As it can be seen from the definition given above, the Euclidean space is a finite-dimensional unitary space over the field of real numbers. In the following, we will denote the abstract Hilbert space by $\mathfrak{H}$ and the corresponding inner product operation  by $(f,g)_{\mathfrak{H}},\,f,g\in\mathfrak{H}. $

Let $\mathfrak{A}$ be a set of elements of an arbitrary nature  and let $\mathfrak{H}$ be a system of complex functionals defined on the set  $\mathfrak{A}$ and forming a Hilbert space $\mathfrak{H}.$  A complex functional of two variables $K(x,y),\,f,g\in\mathfrak{A}$ is called by the reproducing kernel of the space $\mathfrak{H}$ if the following condition is satisfied. For an arbitrary fixed value  $x$, the functional $K(x,y)$ is an element of the space $\mathfrak{H},$ in symbols we have $ K_{x}(y):=K(x,y)\in \mathfrak{H},$ moreover, for an arbitrary  functional $f\in\mathfrak{H}$, we have
$$
f(x)=(f,K_{x})_{\mathfrak{H}}.
$$
It
should be noted that there are such  representatives of the  Hilbert space   that   a corresponding  reproducing kernel  does not exist. The Hilbert space containing the reproducing kernel  is called by the reproducing kernel Hilbert space  (RKHS). The following theorem establishes the criterion of the reproducing kernel existence.

\begin{teo}  A Hilbert space $\mathfrak{H}$ is RKSH if and only if   for an arbitrary element   $y\in\mathfrak{A}$ there exists a constant $C_{y}$ such that
$$
|f(y)|\leq C_{y}\|f\|_{\mathfrak{H}},\,f\in \mathfrak{H}.
$$
\end{teo}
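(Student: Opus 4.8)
The plan is to recognize the statement as the standard continuity criterion for the evaluation functionals and to prove the two implications separately, the nontrivial one resting on the Riesz representation theorem. Throughout, for a fixed point $y\in\mathfrak{A}$ I would work with the evaluation functional $E_{y}:\mathfrak{H}\to\mathbb{C}$, $E_{y}(f):=f(y)$, and the whole argument reduces to relating continuity of $E_{y}$ to the existence of its representer.

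For the necessity part ($\Rightarrow$), I would assume $\mathfrak{H}$ is an RKHS and simply exploit the reproducing identity together with the Cauchy--Schwarz inequality. Fixing $y$ and inserting the evaluation point into the definition gives $f(y)=(f,K_{y})_{\mathfrak{H}}$, whence
$$
|f(y)|=\left|(f,K_{y})_{\mathfrak{H}}\right|\leq \|K_{y}\|_{\mathfrak{H}}\,\|f\|_{\mathfrak{H}},\quad f\in\mathfrak{H},
$$
so the claimed bound holds with the explicit choice $C_{y}:=\|K_{y}\|_{\mathfrak{H}}$. This direction is routine and carries no real obstacle.

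For the sufficiency part ($\Leftarrow$), the idea is to produce the kernel from the representers furnished by the Riesz theorem. First I would note that $E_{y}$ is linear, since $E_{y}(\alpha f+\beta g)=\alpha f(y)+\beta g(y)$, and that the hypothesis $|f(y)|\leq C_{y}\|f\|_{\mathfrak{H}}$ is precisely the assertion that $E_{y}$ is bounded, hence continuous, with $\|E_{y}\|\leq C_{y}$. Since $\mathfrak{H}$ is complete, the Riesz representation theorem applies and yields a unique element, which I denote $K_{y}\in\mathfrak{H}$, such that $f(y)=(f,K_{y})_{\mathfrak{H}}$ for every $f\in\mathfrak{H}$. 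Letting $y$ range over $\mathfrak{A}$ and setting $K(x,y):=K_{x}(y)$ then produces a complex functional of two variables for which $K_{x}\in\mathfrak{H}$ for each fixed $x$ and for which the reproducing identity $f(x)=(f,K_{x})_{\mathfrak{H}}$ holds by construction; thus $K$ is a reproducing kernel and $\mathfrak{H}$ is an RKHS.

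The only point that deserves care --- and what I would flag as the main, though mild, obstacle --- is the correct application of the Riesz theorem: one must verify that the evaluation functional is genuinely a bounded linear functional on the whole of $\mathfrak{H}$ before invoking it, and that the completeness of $\mathfrak{H}$ (built into its being a Hilbert space) is exactly what guarantees the representer exists. Once these are in place, uniqueness of the representer makes the assignment $y\mapsto K_{y}$, and therefore the two-variable kernel $K(x,y)$, well defined, and nothing further is required.
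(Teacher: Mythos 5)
Your proof is correct and complete. Note that the paper itself offers no proof of this theorem at all --- it is stated as a known classical criterion and the text moves directly on to the Moore--Aronszajn theorem --- so there is no in-paper argument to compare against; your two-implication argument (Cauchy--Schwarz with $C_{y}=\|K_{y}\|_{\mathfrak{H}}$ for necessity, boundedness of the evaluation functional plus the Riesz representation theorem for sufficiency) is the standard proof, and it fits the paper's conventions: since $\mathfrak{H}$ is by definition a Hilbert space of functionals on $\mathfrak{A}$, the Riesz representer $K_{x}$ is automatically such a functional, so the two-variable kernel $K(x,y):=K_{x}(y)$ is well defined and satisfies exactly the reproducing property $f(x)=(f,K_{x})_{\mathfrak{H}}$ as the paper defines it.
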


We need the following  theorem.

\begin{teo}\label{T22} (Moore-Aronszajn)  Suppose that $K(x,y)$ is a  Hermitian symmetric, positive definite kernel on the set $\mathfrak{A}\times\mathfrak{A}.$ Then there exists a unique Hilbert space $\mathfrak{H}$ of functions defined on $\mathfrak{A},$ for which $K$ is the reproducing kernel. The space is defined as follows, consider the linear span of the functionals $K_{x},\,x\in \mathfrak{A},$ completing this set according to the norm generated by the following inner product, we obtain the desired RKHS
$$
\left(\sum\limits_{j=1}^{n}\alpha_{j}K_{x_{j}}, \sum\limits_{j=1}^{m}\beta_{j}K_{y_{j}}\right)_{\mathfrak{H}}:=\sum\limits_{j=1}^{n}\sum\limits_{k=1}^{m}\alpha_{j}\overline{\beta_{k}}K(x_{j},y_{k}),\;\alpha,\beta\in \mathbb{C}.
$$
\end{teo}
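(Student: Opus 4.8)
The plan is to carry out the standard two-stage construction — first build a pre-Hilbert space out of the kernel sections, then complete it — while keeping in mind that the abstract metric completion must be realized concretely as a space of honest functions on $\mathfrak{A}$. First I would introduce the linear span $\mathfrak{H}_{0}:=\mathrm{span}\{K_{x}:x\in\mathfrak{A}\}$ and equip it with the sesquilinear form prescribed in the statement. The initial task is to verify that this form is well defined, i.e. independent of the particular way an element is written as a finite combination $\sum_{j}\alpha_{j}K_{x_{j}}$. This follows from the reproducing-type identity $(\sum_{j}\alpha_{j}K_{x_{j}},K_{y})_{\mathfrak{H}}=\sum_{j}\alpha_{j}K(x_{j},y)$, which pins down the value of the form against any generator intrinsically; Hermitian symmetry of the form is inherited from the Hermitian symmetry of $K$, linearity is immediate, and positive semidefiniteness $(f,f)_{\mathfrak{H}}\geq 0$ is precisely the positive-definiteness hypothesis $\sum_{j,k}\alpha_{j}\overline{\alpha}_{k}K(x_{j},x_{k})\geq 0$ applied to the coefficient vector of $f$.

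The decisive step is to upgrade positive semidefiniteness to a genuine inner product and, simultaneously, to realize completed elements as functions. For $f=\sum_{j}\alpha_{j}K_{x_{j}}\in\mathfrak{H}_{0}$ the reproducing identity gives $f(x)=(f,K_{x})_{\mathfrak{H}}$, whence the Cauchy-Schwarz inequality (valid already for a merely positive semidefinite form) yields the pointwise bound $|f(x)|^{2}\leq (f,f)_{\mathfrak{H}}\,K(x,x)$. This bound does double duty: it shows that $(f,f)_{\mathfrak{H}}=0$ forces $f\equiv 0$ as a function, so the form is a bona fide inner product on $\mathfrak{H}_{0}$; and it shows that every $\mathfrak{H}_{0}$-Cauchy sequence is Cauchy pointwise at each $x\in\mathfrak{A}$, hence converges to a well-defined limiting function. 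I would then define $\mathfrak{H}$ as the set of such pointwise limits, transport the inner product by continuity, and check that the assignment of an abstract Cauchy class to its pointwise limit is injective — once more via the same bound, since a null class has null pointwise limit. This injectivity check is the main obstacle, as it is exactly what separates the concrete function space from the purely metric completion and is the point where the positive-definiteness of $K$ is genuinely used.

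Finally I would confirm the reproducing property on all of $\mathfrak{H}$: since $K_{x}\in\mathfrak{H}_{0}\subset\mathfrak{H}$ and the functional $f\mapsto(f,K_{x})_{\mathfrak{H}}$ is continuous, the identity $f(x)=(f,K_{x})_{\mathfrak{H}}$ passes from $\mathfrak{H}_{0}$ to its closure by taking limits. Uniqueness I would settle by a density argument: if $\mathfrak{H}'$ is any Hilbert space of functions on $\mathfrak{A}$ having $K$ as reproducing kernel, then necessarily $K_{x}\in\mathfrak{H}'$ for every $x$, and the inner product of $\mathfrak{H}'$ restricted to $\mathfrak{H}_{0}$ coincides with the prescribed one because both equal $K(x_{j},y_{k})$ on generators. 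Moreover $\mathfrak{H}_{0}$ is dense in $\mathfrak{H}'$, since any $g\in\mathfrak{H}'$ orthogonal to every $K_{x}$ satisfies $g(x)=(g,K_{x})_{\mathfrak{H}'}=0$ and hence vanishes. Consequently $\mathfrak{H}'$ and $\mathfrak{H}$ are both the completion of one and the same inner-product space $\mathfrak{H}_{0}$, so they coincide, which proves both existence and uniqueness.
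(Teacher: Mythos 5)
The paper itself offers no proof of this statement: Theorem \ref{T22} is the classical Moore--Aronszajn theorem, quoted as a known result, and the only accompanying remark is that ``the axioms of the inner product can be verified directly taking into account the positive definiteness of the kernel $K$.'' So there is no internal argument to match; your proposal must be judged as a reconstruction of the standard proof, and in outline it is exactly that: pre-Hilbert space $\mathfrak{H}_{0}=\mathrm{span}\{K_{x}\}$, well-definedness of the form via the intrinsic identity $\bigl(\sum_{j}\alpha_{j}K_{x_{j}},K_{y}\bigr)_{\mathfrak{H}}=f(y)$, the pointwise bound $|f(x)|^{2}\leq (f,f)_{\mathfrak{H}}K(x,x)$ from Cauchy--Schwarz for semidefinite forms, realization of the completion as pointwise limits, extension of the reproducing property by continuity, and uniqueness by density of $\mathfrak{H}_{0}$ in any competitor space. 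All of this is correct and, notably, more than the paper supplies.

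There is, however, one genuine gap, and it sits precisely at the step you yourself call ``the main obstacle.'' You justify injectivity of the map from abstract Cauchy classes to pointwise limits by saying ``a null class has null pointwise limit.'' That is the wrong direction: it shows the map is well defined on equivalence classes (norm-null implies pointwise-null), which is trivial from the bound, whereas injectivity requires the converse --- if a Cauchy sequence $(f_{n})\subset\mathfrak{H}_{0}$ converges pointwise to the zero function, then $\|f_{n}\|_{\mathfrak{H}}\to 0$. The bound $|f(x)|^{2}\leq\|f\|^{2}K(x,x)$ alone cannot deliver this, since it only controls function values by norms, not norms by function values. The standard repair uses the finite-rank structure of $\mathfrak{H}_{0}$: fix $\varepsilon>0$ and $N$ with $\|f_{n}-f_{m}\|<\varepsilon$ for $n,m\geq N$; for fixed $n\geq N$ write $f_{n}=\sum_{j}\alpha_{j}K_{x_{j}}$, so that $(f_{m},f_{n})_{\mathfrak{H}}=\sum_{j}\overline{\alpha}_{j}f_{m}(x_{j})\to 0$ as $m\to\infty$ by the pointwise hypothesis; then $\|f_{n}\|^{2}\leq\|f_{n}\|\,\|f_{n}-f_{m}\|+|(f_{n},f_{m})_{\mathfrak{H}}|$ yields $\|f_{n}\|\leq\varepsilon$ in the limit. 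Without this argument your construction of $\mathfrak{H}$ as a function space is not yet licensed, because two inequivalent Cauchy classes could a priori share a pointwise limit, making the transported inner product ambiguous. Inserting this paragraph makes the proof complete.
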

Note that the axioms of the inner  product can be verified directly taking into account the positive definiteness of the kernel $K.$
According to this definition, we have
$$
(K_{x},K_{y})_{\mathfrak{H}}=K(x,y),\,x,y\in\mathfrak{A}, \; f=\sum\limits_{j=1}^{\infty}\alpha_{j}K_{x_{j}},\,x_{j}\in \mathfrak{A},\,f\in \mathfrak{H},
$$
where   convergence is understood in the sense of  the norm of the  Hilbert space   $\mathfrak{H}.$

\subsubsection{Kernel based    mapping}
Consider the sets of elements $\mathfrak{X}_{1},\mathfrak{X}_{2},$ in general, these can be sets of elements of an arbitrary nature. In accordance with the considered case, the elements of sets correspond to objects (cells, samples)   having a set of features   (various measurements reflecting one or more properties of the object). Thus, a correspondence naturally arises between a set of objects and a set of vectors, the coordinates of which are quantitative characteristics of features. We will consider the
subsets
 $$
X_{s}=\left(x^{(s)}_{1},x^{(s)}_{2},...,x^{(s)}_{n_{s}}\right)\subset\mathfrak{X}_{s},\;s=1,2.
$$
The numbers $n_{1},n_{2}$ reflect the number of objects under consideration. Note that we do not assume the existence of any similarity  between these subsets. At the same time, the idea of further reasonings is to construct a mapping  of these subsets into a certain space in which their images would be comparable.

As the main tool for further study, we use the following  positively defined symmetric kernels
$\gamma_{s}(x,y),\,x,y\in \mathfrak{X}_{s}$  having  the range of values belonging to   $ \mathbb{C} .$   The biological meaning of these mathematical constructions may be to quantify the similarity between objects. Let us  introduce the following notation
$$
K^{(s)} =    \left\{K^{(s)}_{qj}\right\}:=\gamma_{s}\left(x^{(s)}_{q},x^{(s)}_{j}\right),\;q,j=1,2,...,n_{s},\,s=1,2,
$$
i.e. $K^{(s)}\in \mathbb{C}^{n_{s}\times n_{s}}.$
Since, in accordance with    the made assumptions, both kernels are symmetric, positive definite, then  using   Theorem \ref{T2}, we can  construct Hilbert spaces $\mathfrak{H}_{s}$ (RKSH) of functionals defined on $\mathfrak{X}_{s}$  for which $\gamma_{s}$ are reproducing kernels.

Following the idea of finding mappings  to some space in which the images of $\mathfrak{X}_{s}$ would be comparable, consider the following functionals
$$
z^{(s)}_{l}(x) =\sum\limits_{j=1}^{n_{s}}\alpha^{(s)}_{lj}\gamma_{s}\left(x^{(s)}_{j},x\right),\,x\in \mathfrak{X}_{s},\;l=1,2,...,p,\;p\in \mathbb{N}.
$$
 Note that in accordance with   Theorem \ref{T2}  these functionals are elements of the space
$\mathfrak{H}_{s}.$ Thus, the following  operator is defined
\begin{equation}\label{16n}
 F^{(s)}:\mathfrak{X}_{s}\rightarrow \mathbb{C}^{p},\;F^{(s)}x = \mathbf{z}^{(s)}(x),\,x\in \mathfrak{X}_{s},
\end{equation}
where
$$
   \mathbf{z}^{(s)}(x)=\left(z^{(s)}_{1}(x),z^{(s)}_{2}(x),...,z^{(s)}_{p}(x) \right)^{T}.
$$
It is clear that it is possible to implement the correspondence in this way, regardless of the nature of the set $\mathfrak{X}_{s},$ this universality provides the idea of the so-called "alignment" of manifolds. Below, for the convenient form of writing we omit the  index $s,$  however the further  reasonings are correct in both cases.
Denote
$$
A \in \mathbb{C}^{ p \times n  },\;     A :=\left\{\alpha _{qj}\right\}.
$$
 Note that
\begin{equation}\label{16y}
A K     =\left\{z _{q}\left(x _{j}\right)\right\}\in \mathbb{C}^{ p \times n }.
\end{equation}
Using relation \eqref{16y}, applying Theorem \ref{T22}, we get
$$
A K A^{ \ast}=\left\{v _{qj}\right\},\;v _{qj} =\sum\limits_{k=1}^{n }z _{q}\left(x _{k}\right) \overline{\alpha} _{jk}
=\sum\limits_{k=1}^{n }\overline{\alpha}_{jk}    \sum\limits_{m=1}^{n }\alpha _{qm}\gamma\left(x _{m},x _{k}\right)=
$$
\begin{equation}\label{11a}
 =
 \left(  \sum\limits_{m=1}^{n }\alpha _{mj}K_{x _{m}}, \sum\limits_{k=1}^{n }\alpha _{kq}K_{x _{k}} \right)_{ \mathfrak{H} }  =
 \left( z _{j},z _{q}  \right)_{ \mathfrak{H} }.
\end{equation}
Thus, applying the above reasonings in  the both cases corresponding to the value of the  index $s,$ we can rewrite the latter relations in the form
\begin{equation*}\label{16y}
A^{(s)} K^{(s)} =\left\{z^{(s)} _{q}\left(x^{(s)} _{j}\right)\right\}\in\mathbb{C}^{ p \times n_{s} },\;A^{(s)} K^{(s)} A^{(s) \ast}=\left\{  \left( z^{(s)} _{j},z^{(s)} _{q}  \right)_{ \!\! \! \mathfrak{H} }   \right\}\in \mathbb{C}^{p\times p}.
\end{equation*}

The next challenge  is to select the mapping  parameters in order to minimize the  distances,   understood in the sense of a function of the Euclidian metric, between the images. Using the Gaussian  radial basic function (RBF), we will regulate matrices  $A^{(s)}$ under a certain condition connecting them and  determined by the relation  between the elements belonging to  $X_{s}.$     Thus, we can determine the similarity between the elements of the sets
$\mathfrak{X}_{s},\;s=1,2$ in the sense of the  distance  between them given by the following formula
$$
G^{(s,k)}_{qj} :=G\left\{\mathbf{z}^{(s)}\left(x^{(s)}_{q}\right),\mathbf{z}^{(k)} (x^{(k)}_{j} )\right\},\;\,s,k=1,2,\;q=1,2,...,n_{s},\;    j=1,2,...,n_{k},
$$
where the Gaussian   RBF  is defined as follows
$$
G(\mathbf{u},\mathbf{v}):=e^{- t^{-1}\|\mathbf{u}-\mathbf{v}\|_{ \mathbb{C}^{p}}^{2}  },\,\mathbf{u},\mathbf{v}\in  \mathbb{C}^{p},\,t\in\mathbb{R}\setminus\{0\}.
$$
The choice of the parameter $t $ is determined by a concrete  application. Note that the term distance is used in the heuristic sense since the construction does not satisfy the axioms of metric space, the verification is left to the reader.
It is clear that using the Gaussian   RBF, we can establish similarity in the sense of the  distance  between the elements $X_{s},$ for identical and different values of $s.$ This approach leads to the following function reflecting both the relationship between  elements within the sets and the relationship between  elements of different sets
$$
 \mathfrak{G}\left\{ A^{(1)}, A^{(2)}\right\}=\frac{1}{n^{2}_{1}}\sum\limits_{i,j=1}^{n_{1}}G^{(1,1)}_{ij}-\frac{2}{n_{1}n_{2}}\sum\limits_{i=1}^{n_{1}}\sum\limits_{j=1}^{n_{2}}G^{(1,2)}_{ij}+\frac{1}{n^{2}_{2}}\sum\limits_{i,j=1}^{n_{2}}G^{(2,2)}_{ij}.
$$
In accordance with the paper  \cite{Jie Liu 2019} results, the latter formula has been  related to MMD construction \cite{Chwialkowski2015}. However, we use the exclusive notation since the similarity of definitions is rather vague.

Taking into account the specifics of the applications, the authors of the paper  \cite{Jie Liu 2019} consider that it is not sufficient  to find the minimum of the function $ \mathfrak{G}$ relative to the matrices $A^{(s)}.$ Thus, an additional term characterizing distortion
is introduced  in order to ensure the preservation of relations between the images of elements, i.e.
$$
\mathrm{dis}\left\{A^{(s)}\right\}=\left\|  K^{(s)}- K^{(s)}A^{(s)\ast} K^{(s)}A^{(s) } \right\|_{F},
$$
where the Frobenius  norm is used. This expression quantifies how much the matrix  $K^{(s)}$ of inner products between
the elements in the initial  space differs from the matrix
of inner products after mapping. The restriction imposed  to $\mathrm{dis}\left\{A^{(s)}\right\}$ inherently guarantees that
the distortion between the data in the initial space and the  corresponding data after mapping should be small. Consider the following penalty function
$$
\mathrm{pen}\left\{A^{(s)} \right\}=\left\| I_{p}- A^{(s)}K^{(s)}A^{(s)\ast}  \right\|_{F}.
$$
Note that in accordance with the relation \eqref{11a}, we have
$$
I_{p}- A^{(s)}K^{(s)}A^{(s)\ast}  =\left\{\delta_{qj}-\left( z^{(s)}_{j}, z^{(s)}_{q}  \right)_{\!\!\mathfrak{H}_{s}}\right\},\;q,j=1,2,...,p.
$$
The latter relation characterize the influence of the penalty function, thus it finds the mapping having coordinates  with the property close  to the orthonormal one.
 In order to ensure that the mapping to $\mathbb{R}^{p}$ is satisfied this property, we can add the penalty function as a summand.

Taking into account the above, we arrive at  the  problem of finding the minimum of the objective function
$$
\underset{ A^{(1)},A^{(2)}   }{ \mathrm{argmin}}\,\left( \mathfrak{G}\left\{ A^{(1)}, A^{(2)}\right\}  +\sum\limits_{s=1}^{2}\lambda_{1}\mathrm{dis}\left\{A^{(s)}\right\}+\lambda_{2} \mathrm{pen}\left\{A^{(s)}\right\}\right).
$$
The solution can be find due to   the Gradient Descent method what is noted in  the paper  \cite{Jie Liu 2019}. In practice, it is necessary to specify several parameters to solve the optimization problem. These include the dimension $p$ of the space $\mathbb{C}^{p},$   the parameter of Gaussian  RBF, as well as the parameters $\lambda_{1},\lambda_{2}.$ In the paper \cite{Jie Liu 2019} the authors assume that the parameters $p$ and $\lambda_{1},\lambda_{2}$ are set by the user, while   investigating the algorithm for finding a solution.

Now assume that the set of elements  $\mathfrak{X}_{s}$ is endowed with the complex structure. In this case, we have a mapping naturally induced by the mapping \eqref{16n}, i.e.
$$
 F^{(s)}:\mathrm{Re}\mathfrak{X}_{s}\rightarrow \mathbb{R}^{p},\;F^{(s)}:\mathrm{Im}\mathfrak{X}_{s}\rightarrow \mathbb{R}^{p}.
$$
Thus, we obtain a mapping implementing  the manifold alignment where a union of a four data sets cupeled in the natural way justified by harmonious mathematical structures can be considered. The advantage of the kernel  based approach is in the following. Firstly, we should remark that the set  $\mathfrak{X}_{s}$ has  or does not have an arbitrary topological structure. It can be infinite-dimensional space endowed with the Hilbert space structure or finite set without any structure. This assumption makes a grate freedom in modifications of the method. In particular the interest arises in the case when $\mathfrak{X}_{s}$ is endowed with the quaternion structure     $\mathbb{H}.$ In this case the problem is to modify the reproducing kernel construction to construct the desirable mapping. Further generalizations in this direction can lead us the hypercomplex numbers and
  Clifford algebras. The arbitrary structure of the preimages  is fully adopted to the implementation of the idea to unite several data sets of various nature. However, the concept of the mapping requires a concrete technique that should be invented in the general case corresponding to     Clifford algebras.

\subsection{Prospective application to the modern methods}

The  mixOmics method \cite{Rohart2017} represents a specialized package for the R language, developed for multivariate analysis of biological data with emphasis on data exploration, dimensionality reduction, and results visualization. The input data structure in mixOmics assumes use of matrix X of size N samples by P predictors with continuous values and a categorical outcome vector y, which is automatically converted to an indicator matrix Y of size N by K classes. Output data represents a comprehensive set of results including latent components for projecting samples into lower-dimensional space, loading vectors demonstrating each feature's contribution, molecular signatures as selected features, classification and prediction results for new samples, as well as various visualization types including sample plots, variable plots, correlation networks, and heatmaps. All methods in the package are implemented as projection techniques where samples are summarized through H latent components defined as linear combinations of original predictors. This approach ensures not only effective data dimensionality reduction but also preserves result interpretability, which is critically important for understanding biological mechanisms and identifying significant biomarkers in omics studies. However,  the general concept of mapping requires using of  the unified structure represented in this paper. The kernel based method can be involved as the   universal method dealing with the abstract data sets.

The  MultiVI method \cite{Ashuach2023} represents a deep generative model for probabilistic analysis of multimodal single-cell data, designed to integrate different molecular modalities into a unified representation. This method is developed for joint analysis of transcriptome, chromatin accessibility, and surface protein expression data from individual cells, even when information for some cells is available only for one or several modalities. MultiVI finds application in cellular diversity studies, characterization of cell types and states, as well as in data integration tasks from different laboratories and sequencing technologies. The method can combine single-cell RNA sequencing data (scRNA-seq), chromatin accessibility analysis (scATAC-seq), and surface protein measurements using antibodies (CITE-seq). MultiVI effectively works with both fully paired data, where all modalities are measured in the same cells, and partially paired or unpaired data, where different cells are characterized by different sets of modalities. The data processing workflow in MultiVI consists of several sequential stages. First, the method uses modality-specific encoders based on deep neural networks to create latent representations of each modality, accounting for batch effects and technical data features. Then these representations are combined into a joint latent space through averaging with application of distance penalty between modalities. At the final stage, modality-specific decoders reconstruct observations from the latent representation using appropriate distributions for each data type. Input data is provided in count matrix format for each modality, where rows correspond to cells and columns to features (genes, genomic regions, or proteins). MultiVI generates low-dimensional joint cell representations, normalized and batch-corrected values for all modalities, as well as uncertainty estimates for imputed values. Here, we should remark that it is possible to involve the unified natural structure to create a low-dimensional representation. Using the coupled Laplacian mapping, we can form an intermediate subspace of images and then apply the invented algorithm in order to implement coupling more accurate.

The MUON method \cite{Bredikhin2022} represents a multimodal analytical platform for omics data, developed for organizing, analyzing, visualizing, and sharing multimodal biological data. This tool is designed to solve computational challenges arising when working with multi-omics experiments, including efficient storage, indexing, and seamless access to large data volumes, tracking biological and technical metadata, as well as processing dependencies between different omics layers. MUON can combine various types of omics data including single-cell RNA sequencing (scRNA-seq), chromatin accessibility data (scATAC-seq), epitope profiling (CITE-seq), DNA methylation, as well as spatial omics data. The platform supports trimodal analyses such as scNMT-seq or TEA-seq and can process arbitrary numbers of data modalities. The processing workflow in MUON consists of several sequential stages. First, preprocessing of individual modalities occurs, including quality control, sample filtering, data normalization, and feature selection for analysis. Then dimensionality reduction methods are applied, which can work with individual modalities (e.g., principal component analysis) or jointly process multiple modalities through approaches such as multi-omics factor analysis (MOFA) or weighted nearest neighbors (WNN). At the next stage, cell neighborhood graphs are constructed based on obtained representations, which can use information from individual modalities or combined multimodal representations. The final stage includes creating nonlinear embeddings through UMAP-type methods and clustering for cell type identification. Input data is provided in count matrix format for each modality along with corresponding metadata. MUON uses the MuData container, which represents a hierarchical data structure where each omics modality is stored as an AnnData object.  Output data includes processed count matrices, low-dimensional representations, cell embeddings, neighborhood graphs, cluster labels, and differential analysis results, which can be visualized and used for biological interpretation of results. Apparently, the method uses the concept of embedding allowing to apply   the   methods elaborated in    the paper.

The PolarBear method \cite{Zhang2022}  represents a semi-supervised machine learning model for predicting missing modalities and aligning single-cell data between different types of omic measurements. PolarBear's main task lies in solving the problem of integrating multimodal single-cell data when most cells have only one type of measurement available (e.g., only transcriptome or only chromatin accessibility), while comprehensive co-assay data measuring multiple modalities in a single cell are available in limited quantities. The method can be applied in cellular regulation studies, cellular heterogeneity analysis, differential gene expression studies between cell types, identification of cell-specific regulatory elements in tumor samples, and generating hypotheses about biological processes in modalities inaccessible for direct measurement. PolarBear can combine scRNA-seq and scATAC-seq data, working with various types of co-assay technologies including CAR-seq, SNARE-seq, Paired-seq, and SHARE-seq. Principally, the method can be adapted to work with other types of omic data since it does not require feature correspondence between different measurement modalities. Data processing in PolarBear occurs in two main stages. At the first stage, the method trains two separate beta-variational autoencoders (beta-VAE) for each data modality, using both paired co-assay data and much more numerous unimodal data from public databases. The autoencoders learn to create stable latent cell representations independent of sequencing depth and batch effects. For scRNA-seq, the autoencoder assumes that gene read counts follow a zero-inflated negative binomial distribution, while for scATAC-seq a Bernoulli distribution is used for binary chromatin accessibility peaks. At the second stage, a fully connected translator layer is added between the trained autoencoders, which is trained in supervised mode exclusively on co-assay data to translate between latent representations of two modalities in both directions. Input data is provided as raw gene count matrices for scRNA-seq and binarized peak count matrices for scATAC-seq, with additional information about batches and sequencing depth for each cell. Output data includes predicted profiles in the missing modality, sequencing depth-normalized expression estimates, latent cell representations for alignment between modalities, differential expression analysis results, and cell-specific marker genes, providing possibilities for biological interpretation and subsequent analysis of integrated multimodal data. The alignment between modalities creates the prerequisite for the paper results application.
However, the nature of the semi-supervised model and the only type of measurements available create some obstacles to the direct application of the methods discussed in the paper.

The sciCAN method \cite{Xu Y 2022} represents a method for integrating single-cell chromatin accessibility and gene expression data using cycle-consistent adversarial networks. The method is designed to combine scATAC-seq and scRNA-seq data into a unified representation without requiring prior information about cell correspondence between modalities. sciCAN can be applied for hematopoietic hierarchy analysis, studying cellular responses to CRISPR perturbations, constructing joint developmental cell trajectories, transferring cell type labels between modalities, and other integrative single-cell data analyses. The method can combine single-cell RNA sequencing and single-cell ATAC sequencing data, transforming chromatin accessibility peak matrices into gene activity matrices to ensure compatibility with gene expression data. Input data undergoes logarithmic transformation normalization with pseudocount addition, after which top-3000 highly variable genes are identified for each modality and used as features for integration. Data processing in sciCAN consists of two main components: representation learning and modality alignment. Encoder E projects high-dimensional data from both modalities into a joint low-dimensional space using noise contrastive estimation loss function to learn discriminative representation. For modality alignment, two separate discriminator networks are used: Drna distinguishes modality source in latent space, while Datac works with generator G to create connections between modalities through adversarial training with addition of cycle-consistent losses. The architecture includes fully connected layers with batch normalization and ReLU activation for the encoder, three-layer multilayer perceptrons for discriminators with sigmoid activation, and a two-layer decoder for the generator. Additionally, linear transformation of 128-dimensional latent representation to 32-dimensional output and 25-dimensional SoftMax-activated output for NCE loss computation is applied. Input data is provided in gene expression and gene activity matrix format after preprocessing and normalization, while output data represents 128-dimensional joint latent cell representation used for subsequent integrative analyses including trajectory construction, label transfer, and clustering of cells from both modalities in unified feature space. In this case, we should italicize that the combination of scATAC-seq and scRNA-seq data can be considered in the framework of the paper results. In particular, heterogenous data can be successfully coupled by the kernel based method.

The SCIM method (Single-Cell data Integration via Matching) represents a scalable approach for integrating single-cell data obtained using different profiling technologies without requiring feature correspondence between modalities. The method is designed to recover correspondences between cells measured by different technologies when paired correspondences between datasets are lost due to cell consumption during profiling processes. The SCIM method  can be applied to integrate any single-cell omics technologies including scRNA-seq, CyTOF, proteomics, genomics  provided there is common latent structure in the data. The method can combine data from two or more technologies that measure non-overlapping feature sets, for example expression of different gene sets, gene expression with image characteristics, or any other single-cell measurements originating from the same cell suspension. The  SCIM method consists of two main processing stages: first, an integrated latent space invariant to technology is created using an autoencoder/decoder framework with adversarial objective function, where separate encoder and decoder networks are trained for each technology, and a single discriminator acts in latent space to ensure indistinguishability of representations from different technologies. Then a bipartite matching scheme is applied for pairwise connection of cells between technologies, using their low-dimensional latent representations through an efficient bipartite matching algorithm based on minimum cost maximum flow problem. Input data is provided in cell-by-feature matrix format for each technology, where features are specific to the profiling technology but can represent gene expression, protein levels, or other measurements. Output data represents eight-dimensional latent cell representations in common feature space and paired correspondences between cells from different technologies obtained through bipartite matching algorithm, allowing use of true observed signals per cell pair for any subsequent analysis while maintaining technology-invariant data integration. It should be noted that the specifics of the conditions, such as the use of various profiling technologies and the need for paired correspondences  between data sets, determine the relevance of applying the results of the paper and subsequent comparative analysis.

\section{Conclusions}

In the paper,  having  analyzed  the main mathematical principles forming  the concept of the unsupervised topological alignment we represent a natural algebraic structure coupling the heterogenous  data sets as well as their images. In this regard, the harmonious generalization of the graph Laplacian  method was obtained over the field of the complex numbers. The  kernel based method with the central idea to find an appropriate  structure coupling images of data sets of various nature was extended to the complex vector space.
The prospective theoretical results appeal  to  more complicated algebraic structures allowing to   couple  naturally an arbitrary number of data sets and their images. Thus, the quaternion structure can be considered as  further generalization that leads to the  hypercomplex numbers and the most abstract mathematical object Clifford algebra.
 It was shown that the kernel based methods are completely efficient for the implementation of the idea to unite  data sets of various nature. The main obstacle in further development is that the mapping requires a concrete technique,  i.e. generalizations of the well-known results of the operator theory for the module   over the hypercomplex structure.  Finally, we represented the detailed analysis of the modern  biological methods supplied with the comments on the possible applications of the invented approach.
The authors   believe  that the represented approach is  principally novel while the obtained conclusions admit biological applications.

\end{document}